\numberwithin{equation}{section}
\newcommand{\tr}{\operatorname{tr}}% Ad- representation
\newcommand{\mb}[1]{{\mbox{\boldmath{$#1$}}}}% mathematical bold
\newcommand{\mc}[1]{{\mathcal{#1}}}% mathematical caligraphic
\newcommand{\got}[1]{{\mathfrak{#1}}}% gothic with mbox for  mathematic
\newcommand{\pa}{\partial}
\newcommand{\R}{\ensuremath{\mathbb{R}}}
\newcommand{\K}{\ensuremath{\mathbb{K}}}
\newcommand{\C}{\ensuremath{\mathbb{C}}}
\newcommand{\N}{\ensuremath{\mathbb{N}}}
\newcommand{\db}[1]{{\mathbb{#1}}}% double
\newcommand{\Z}{\ensuremath{\mathbb{Z}}}
\newcommand{\Sp}{\ensuremath{{\mbox{\rm{Sp}}(n,\R)}}}
\newcommand{\SPn}{\ensuremath{{\mbox{\rm{Sp}}(n+1,\R)}}}
\newcommand{\oo}{\ensuremath{\mathbb{1}}}
\newcommand{\UU}{\ensuremath{\mathbb{0}}}
\newcommand{\UUN}{\ensuremath{\mathbb{0}_n}}
\newcommand{\un}{{\oo_n}}
\def\ii{\operatorname{i}}
\newtheorem{Theorem}{Theorem}
\newcommand{\dd}{\operatorname{d}}
\newcommand{\Ka}{K\"ahler}
\newtheorem{Proposition}{Proposition}
\newtheorem{lemma}{Lemma}
\theoremstyle{definition}
\newtheorem{Remark}[lemma]{Remark}
\newcommand{\mr}[1]{{\mathrm{#1}}}% mathematical roman
\begin{document}

\title{Invariant metric on the extended Siegel-Jacobi  upper  half space}

\author{Stefan  Berceanu}
\address[Stefan  Berceanu]{Horia Hulubei National
 Institute for Physics and Nuclear Engineering\\
        Department of Theoretical Physics\\
 P.O.B. MG-6, 077125 Magurele, Romania}
\email{ Berceanu@theory.nipne.ro}
\subjclass[2010]{32F45; 53C55; 53B21; 81R30}
\keywords{Jacobi   group, Jacobi algebra,
Siegel-Jacobi upper half space, extended Siegel-Jacobi upper half
space, invariant metrics, coherent states}

\begin{abstract} The real Jacobi group  $G^J_n(\mathbb{R})$,  defined as the semidirect product of the
Heisenberg group ${\rm H}_n(\R)$ with the symplectic group ${\mr {Sp}}(n,\mathbb{R})$,  admits a matrix  
embedding  in $\text{Sp}(n+1,\mathbb{R})$. The modified 
pre-Iwasawa decomposition of $\rm{Sp}(n,\mathbb{R})$  allows us to
 introduce    a convenient  coordinatization   $S_n$ of
 $G^J_n(\mathbb{R})$,  which for  $G^J_1(\mathbb{R})$  coincides with the  $S$-coordinates.
Invariant one-forms on $G^J_n(\mathbb{R})$ are determined.  
The formula of the  4-parameter invariant metric on 
$G^J_1(\R)$  obtained as sum of squares of 6  invariant one-forms is 
extended to $G^J_n(\R)$,   $n\in\mathbb{N}$.  We obtain a three
parameter   invariant metric on  the  extended
  Siegel-Jacobi upper  half space 
  $\tilde{\mathcal{X}}^J_n\approx\mathcal{X}^J_n\times \mathbb{R}$ by 
  adding the square of an invariant one-form to the  two-parameter  balanced metric
  on  the Siegel-Jacobi upper
half space
$ {\mathcal{X}}^J_n =\frac{G^J_n(\mathbb{R})}{\mr{U}(n)\times\mathbb{R}}$.
\end{abstract}
\maketitle
%%\\
%%%%%%%%%\today       file      GJN5.tex

\tableofcontents
%%%% \newpage

\section{Introduction}

The real Jacobi group \cite{bs,ez,tak,ZIEG}
of  degree $n$ is defined as $G^J_n(\R):=\mr{H}_n(\R)\rtimes\Sp$, where
$\mr{H}_n(\R)$ denotes the   real Heisenberg group.
The Siegel-Jacobi  upper half space is the  $G^J_n(\R)$-homogeneous
manifold 
$\mc{X}^J_n:=\frac{G^J_n(\R)}{\rm{U}(n)\times \R}\approx \mc{X}_n\times
\R^{2n}$ \cite{SB15,SB19}, \cite{yang}-\cite{Y08}, where  $\mc{X}_n$ denotes the Siegel upper half space realized as 
  $ \frac{\Sp}{\rm{U}(n)}$ \cite[p 398]{helg}.

  The Jacobi
group 
$G^J_n:=\mr{H}_n\rtimes{\rm Sp}(n,\R)_{\C}$,
where ${\rm Sp}(n,\R)_{\C}:= {\rm Sp}(n,\C)\cap {\rm U}(n,n)$
\cite{multi,coord} is  also studied in Mathematics,    Mathematical
Physics and Theoretical Physics,   together  with the 
 $G^J_n$-homogeneous Siegel-Jacobi ball $\mc{D}^J_n\approx
\C^n\times\mc{D}_n$  \cite{multi},
where $\mc{D}_n$ denotes the
Siegel ball realized as $\frac{{\rm Sp}(n,\C)}{{\rm U}(n)}$
\cite[p 399]{helg}.  

 It
is well known that $G^J_n(\R)$, ${\rm Sp}(n,\R)$, $\mr{H}_n(\R)$,
$\mc{X}^J_n$
 and $\mc{X}_n$ are isomorphic with $G^J_n$, ${\rm Sp}(n,\R)_{\C}$, $\mr{H}_n$,
$\mc{D}^J_n$, 
respectively  $\mc{D}_n$, see  \cite{multi}-\cite{FC}, \cite{gem, bs,ez,yang,Y07}.

The dimensions of the enumerated manifolds are: $\dim{\Sp}=2n^2+n$,
$\dim{\mr{H}_n(\R)}$$=2n+1$,
  $\dim{G^J_n(\R)}=(2n+1)(n+1)$,
  $\dim{\mr{U}(n)}=n$,
  $\dim{\mc{X}^J_n}=n(n+3)$,
    $\dim{\tilde{\mc{X}}^J_n}=n(n+3)+1$, $\dim{\mc{X}_n}=n(n+1)$.

The Jacobi group, as  a unimodular, non-reductive, algebraic group of Harish-Chandra
type \cite{gem,LEE03}, \cite{SA71}-\cite{SA80}, also  a coherent state
(CS) type group \cite{SB03,lis2,lis,mosc,mv,neeb} is an interesting object in
Mathematics   \cite{coord,SB15}.
$\mc{D}^J_n$ is a
 partially bounded domain, non-symmetric,  a  Lu Qi-Keng manifold,  a
 projectively induced quantizable \Ka~manifold \cite{SB15}, \cite{yang,Y07}.

 The Jacobi group has many applications in several branches of Physics: quantum mechanics,
geometric quantization, nuclear structure, signal processing, quantum optics,
in particular squeezed states and quantum teleportation, see references in \cite{SB19}.
The Jacobi group was known  to 
physicists under other names as  Hagen
  \cite{hagen},  Schr\"odinger \cite{ni},    or  Weyl-symplectic group
 \cite{kbw1}. The Jacobi
group is  responsible  for the   squeezed states  \cite{ho,stol,lu,yu, ken} in quantum optics
\cite{ali,dod,dr,mandel,siv}. 

 The Jacobi
group was investigated  in several  publications
\cite{holl}-\cite{coord}, \cite{SB15}  with  Perelomov's CS method
 \cite{perG} based on \Ka~
manifolds
\cite{ber73}-\cite{berezin}, \cite{lis2,lis,mosc,mv,neeb}
  associated  to
$G^J_n$, determining the balanced metric \cite{don}. Berezin's
quantization \cite{ber73}-\cite{berezin}, \cite{Cah,cah,eng,raw}
related to the Jacobi group has been also  investigated
\cite{SB15,ca1,ca2}. But the  CS method is
applicable only to \Ka~ manifolds. Because  it is
desirable  to
impose  the invariance of    metric also on manifolds of odd dimension, 
the CS  method  must  be abandoned. 

 In this paper  we introduce   an
  odd dimensional manifold, called   
  extended Siegel-Jacobi upper half space of order n, 
  $\tilde{\mc{X}}^J_n:=\frac{G^J_n(\R)}{\mr{U}(n)}\approx
  \mc{X}^J_n\times\R $, a generalization of the 5-dimensional
  Siegel-Jacobi upper half-plane
  ${\tilde{\mc{X}}^J_1}=\frac{G^J_1(\R)}{\text{SO}(2)}\approx\mc{X}_1\times\R^3$
  considered  in \cite{SB20,SB19}.
  Because   the Jacobi group governs
 the squeezed  states \cite{SB81,SB82}, we are  expecting  that   the manifold
 $\tilde{\mc{X}}^J_n$ to have applications 
  in quantum optics. We recall  that the squeezed states are a particular class of ``minimum
uncertainty states'' (MUS) \cite{mo} and  that ``Gaussian pure states''
(``Gaussons'') \cite{si} are more general MUSs.

The invariant metrics on homogeneous manifolds associated to
the  real  Jacobi group  $G^J_1(\R)$ were obtained in \cite{SB20,SB19},
applying  Cartan's  moving frame method
\cite{cart4,cart5,ev}. We have
determined a 3-parameter  invariant metric on the extended Siegel-Jacobi
upper half-plane   
\cite{SB20,SB19}. To get  the
invariant metric on   ${\tilde{\mc{X}}^J_1}$, we have determined the
invariant one-forms $\lambda_1,\dots,\lambda_6$ on $G^J_1(\R)$. Then
we have calculated  the invariant vector fields $L^j$ 
verifying the relations $<\lambda_i|L^j>=\delta_{ij}$,
$i,j=1,\dots,6$,  such that $L^j$
are orthonormal with respect to the  4-parameter invariant 
  metric   
$\dd s^2_{G^J_1(\R)}$ expressed  in the $S$-coordinates $(x,y,\theta,p,q,\kappa)$
 \cite[p  10]{bs},
where $\theta \in [0,2\pi)$ and the other $S$-coordinates are in $\R$.

In the present paper we apply to $G^J_n(\R)$, $n\in \N$,   the method
applied in \cite{SB15} to  $G^J_1(\R)$. Firstly  we   determine the
invariant
one-forms on $G^J_n(\R)$.  If a point  $g \in G^J_n(\R)$ is
parametrized by  the coordinates $(M,X,\kappa)$, 
where $M\in{\rm Sp}(n,\R)$,
$X:=(\lambda,\mu)\in M(1,2n,\R), \kappa\in\R$, and $(p,q)=XM^{-1}$,
then we have the
  following representation of the real Jacobi group embedded  in \SPn ~\cite{Yg93,ZIEG}
  \begin{equation}\label{ggmin1}
  g =\left(\begin{array}{cccc}a & \UU_{n1} & b& q^t\\
              \lambda & 1& \mu &\kappa\\
              c & \UU_{n1} &  d & -p^t\\
              \UU_{1n} &  0 & \UU_{1n}&
                                        1\end{array}\right),M=\left(\begin{array}{cccc}a&b\\c&d\end{array}\right)\in\Sp.
 \end{equation}
       In this paper  we parametrize the group  $G^J_n(\R)$ with  a
       system  of coordinates 
$(x,y,X,Y,$\newline $p,q,\kappa)$,
where $x+\ii y\in \mc{X}_n$,  $X+\ii Y\in { \mr U}(n)$, while $(p,q,\kappa)$ characterize the Heisenberg group
${\rm H}_n(\R)$. This  system of coordinates, denoted $S_n$, $n\in \N$, coincides
for $n=1$ with the  $S$-coordinates of
$G^J_1(\R)$ \cite[p 10]{bs}.
 The main ingredient of  the $S_n$-parametrization of $G^J_n(\R)$ is the modified pre-Iwasawa decomposition of the
symplectic group $\Sp$, inspired by \cite{arw,goss}. We obtain a
4-parameter 
invariant metric on $G^J_n(\R)$, which in the case  $n=1$ coincides
with the 4-parameter  invariant  metric determined in \cite{SB19}. However, the
explicit expressions  for the metrics in Proposition \ref{PRm}
obtained from  the
invariant one forms on $G^J_n(\R)$ are quite
complicated, so in order to obtain the invariant metric on the odd
dimensional extended Siegel-Jacobi space $\tilde{\mc{X}}^J_n$  we just
add  the
square of an invariant one-form attached to $\kappa$ to the 2-parameter balanced metric of the Siegel-Jacobi upper half space
obtained via the CS method in \cite{coord,SB15}.

The paper is organized as follows. Section \ref{HL} summarizes the
embedding of the 
Heisenberg group ${\rm H}_n(\R)$ in \SPn.  Section \ref{SPK} describes    the
symplectic group. The pre-Iwasawa 
decomposition is  introduced in Lemma
\ref{R313}, while Lemma \ref{L5} shows that the modified pre-Iwasawa
decomposition is compatible with the linear fractional action of \Sp~
on  $\mc{X}_n$. Section \ref{GHHN} considers the
real Jacobi group $G^J_n(\R)$. The embedding of $G^J_n(\R)$ in \SPn~is
described in Remark \ref{L44}. After choosing  a base of the Lie algebra
$\got{g}^J_n(\R)$  which in particular for $n=1$ coincides with that
in \cite{SB19}, Lemma \ref{actM} describes the action
of the Jacobi group on the homogeneous manifolds $\mc{X}^J_n$ and 
$\tilde{\mc{X}}^J_n$.   In Section \ref{FVF} are calculated  the fundamental
vector fields  (FVF)  associated to the generators of the Jacobi
group on $\mc{X}^J_n$ and 
$\tilde{\mc{X}}^J_n$.  In  Section \ref{IOF} are  obtained the
invariant one-forms on $G^J_n(\R)$ in the $S_n$-coordinates, see Lemma
 \ref{L9}  and \eqref{LFGHL}. The difficulties to calculate the
invariant vector fields once the invariant one-forms are known are
exemplified in Section \ref{difi}. Proposition \ref{PRm} expresses the
4-parameter invariant metric on $G^J_n(\R)$. Proposition \ref{L11}, an extension to
 $n\in\N$ of \cite[Proposition1]{SB20}, expresses the \Ka~two-form
on $\mc{X}_n^J$ in several types of  variables. Remark \ref{REM11}
gives a CS-meaning to the  $S_n$-parameters $p,q$ describing
$G^J_n(\R)$. The invariant metric on the
odd dimensional manifold $\tilde{\mc{X}}^J_n$ is given in Theorem  \ref{PR2}. Finally,
other parametrizations  of the Jacobi algebra $\got{g}^J_n(\R)$  are recalled  in
\S~\ref{AP7}, while Section \ref{AP8} summarizes the method of calculating
the differential of square root of a  symmetric matrix.

To conclude, the new results of this  paper are contained 
in Lemma  \ref{R313}, Lemma \ref{L5}, parts of Lemma \ref{actM}, the
base \eqref{alg2} of $\got{g}^J_n(\R)$, Lemma \ref{L9},
Propositions \ref{FDVF} -- \ref{L11} and Remark \ref{REM11}. The main
result of the present investigation is   stated  in Theorem
\ref{PR2}.

\vspace{1cm}
        
{\bf Notation}   We denote by $\mathbb{R}$, $\mathbb{C}$, $\mathbb{Z}$,
and $\mathbb{N}$ the field of real numbers, the field of complex numbers,
the ring of integers, and the set of positive  integers, respectively.
We denote by $\ii$  the imaginary unit
$\sqrt{-1}$, and the complex 
conjugate of $z$ by $\bar{z}$. We denote the set of  $m\times n$ matrice with entries in the field
$\db{F}$ as $M(m,n;\db{F})$ and if $n=m$  we write
$M(n,\db{F})$. $M(n,\db{F})$ for $\db{F}$ equal with $\R$ or $\C$ is
denoted by $M(n)$.   We denote
the transpose  (the Hermitian conjugate) of the matrix $A$ by $A^t$,
(respectively $A^{\dagger}$). $\un$ denotes the identity matrix of
$M(n,\db{F})$, while  $\UU_{nm}\in M(n,m,\db{F})$ denotes   the matrix with   all
elements   zero and $\UU_n$ means  $\UU_{nn}$.                                       
   $E_p\in M(1,n,\R)$ denotes the matrix with 
  1 on the position $p$,  $(E_p)_i=\delta_{pi}$ and similarly for
  $E_q$, $p,q =1,\dots, n$. $E_{ij}$ denotes the square matrix with entry 1 at the
 intersection of  the $i$th row  with the $j$th column, 
 $(E_{ij})_{kl}=\delta_{ik}\delta_{jl}$, and
 $E_{ij}E_{kl}=\delta_{jk}E_{il}$. 
When the dimension of a submatrix  of a block matrix is not
evident, the   subindices  $pq$
 specify  that the
 respective submatrix  is in $ M(p,q,\R)$. 
We denote by $\dd$  the differential. We use Einstein convention  that repeated indices are
implicitly summed over. We denote  by $\mr{dg}(a_1,\dots,a_n)$
the diagonal matrix which has on diagonal 
$a_1,\dots,a_n$. 
We denote by $<\lambda|L>$ the pairing of the  one-form $\lambda$ with
the vector field $L$. We consider a complex separable
Hilbert space $\got{H}$ endowed with a scalar product $(\cdot,\cdot)$ which is antilinear in the first argument,
$(\lambda x, y) =\bar{\lambda}(x,y)$
$x,y\in\got{H}$, $\lambda\in \C\setminus  0$. 
If $\pi$ is a representation of a Lie group $G$ on the Hilbert space
$\got{H}$ and $\got{g}$ is the Lie algebra of $G$, we
denote $ \mb{X} := \dd \pi(X)$ for $X\in \got{g}$.

\section{The Heisenberg group ${\rm H}_n(\R)$ as subgroup of \SPn}\label{HL}
The real Heisenberg group 
${\rm H}_n(\R)$, parametrized by $(\lambda,\mu,\kappa)$,
$\lambda,\mu\in M(1,n,\R)$, $\kappa\in\R$, has the composition law
\cite{gem,LEE03,SA71C,SA80,Yg93,ZIEG} 
\begin{equation}\label{CLH}
  (\lambda,\mu,\kappa)\times(\lambda',\mu',\kappa')=
    (\lambda+\lambda',\mu+\mu',\kappa+\kappa'+\lambda\mu'^t-\mu\lambda'^t).
    \end{equation}
${\rm H}_n(\R)$ is a particular case of the Heisenberg group $H^{(n,m)}_{\R}$ for $m=1$, see
\cite{yang} and \cite{Y10}.

If $g\in {\rm H}_n(\R)$, we represent it   \cite{Yg93,ZIEG} and its  inverse  embedded in \SPn~ as
\begin{equation}\label{ggminH}g=\left(\begin{array}{cccc}
1 & 0 &0 &\mu^t\\ \lambda & 1&\mu &\kappa\\
                                        0 & 0& 1 & -\lambda^t\\
                                        0& 0& 0& 1\end{array}\right),~
 g^{-1}=\left(\begin{array}{cccc} 1 & 0 &0 & -\mu^t\\
    -\lambda &1-&\mu& - \kappa\\
    0 & 0& 1 & \lambda^t\\
    0& 0& 0 & 1\end{array}\right),\end{equation}
see also notation in \eqref{ggmin1} and  Lemma  \ref{L44}.

If the generators $P_p,Q_q$,  $p,q=1,\dots,n$, $R$,  of the Heisenberg  group are defined
in \eqref{algp}, see also the last three equations in \eqref{alg1}
and Lemma  \ref{L44}, 
\begin{subequations}\label{algp}
  \begin{align}
\label{P1}    P_p & =\left(\begin{array}{cccc} 0 & 0 & 0& 0\\
                      E_{p}& 0& 0& 0\\
                       0 & 0& 0& -E^t_{p}\\
                                                           0& 0& 0&
                                                                    0\\ \end{array}\right),~p=1,\dots,n\\
\label{Q1}      Q_q & =\left(\begin{array}{cccc} 0 & 0 & 0& E^t_{q}\\
                      0 & 0& E_{q}& 0\\
                       0 & 0& 0&  0\\
                                                           0& 0& 0&
                                                                    0 \end{array}\right),~q=1,\dots,n,\\
                                                                  \label{R11}  R & =\left(\begin{array}{cccc} 0 & 0 & 0& 0\\
                      0& 0& 0& 1\\
                       0 & 0& 0& 0\\
                                                           0& 0& 0&
                                                                    0\\ \end{array}\right),
  \end{align}
  \end{subequations}
then \begin{equation}\label{ggmdg}g^{-1}\dd g=P_p\lambda^p+Q_q\lambda^q+R\lambda^r.\end{equation}
            With \eqref{ggminH} and \eqref{ggmdg},
            the left invariant one-forms on ${\rm H}_n(\R)$ are
            %%% \begin{subequations}\label{lplqlr}
            \begin{equation}\label{lplqlr}
                 %%%%  \begin{align}
                     \lambda^p = \dd \lambda_p,\quad
                     \lambda^q  =\dd \mu_q,\quad 
                     \lambda^r = \dd \kappa -\lambda \dd \mu^t+\mu
                                 \dd \lambda^t.
                   %%%%\end{align}
                   \end{equation}
                   The  left action  of the Heisenberg group on itself
                   is obtained from \eqref{CLH}
                   $$\exp(\lambda P+\mu^tQ+\kappa
R)(\lambda_0,\mu_0,\kappa_0)=(\lambda+\lambda_0,\mu+\mu_0,
\kappa+\kappa_0+\lambda\mu_0^t-\mu\lambda_0^t).$$
The left invariant metric on the Heisenberg group is
%%%%%%5\begin{equation}\label{lftmH}
  \[
    g^L(\lambda,\mu,\kappa)=\dd\lambda^2+\dd \mu^2+(\dd \kappa
    -\lambda \dd \mu^t+\mu \dd \lambda^t)^2.
  \]
%%%%%\end{equation}

The fundamental vector fields,  see  \cite[p. 121,  Ch II
  \S~3]{helg},  \cite[p. 42]{kn1}, or \cite[\S ~6.1,  v1]{SB19}, on the Heisenberg group ${\rm H}_n(\R)$ are
%%%%% \begin{subequations}
\[
  %%%%%\begin{align*}
    P^*  =\frac{\pa}{\pa_{\lambda}}+\mu^t\frac{\pa}{\pa_{\kappa}},\quad 
    Q^*  =\frac{\pa}{\pa_{\mu}}-\lambda\frac{\pa}{\pa_{\kappa}},\quad 
    R^*=\frac{\pa}{\pa_{\kappa}}.
 %%%%%% \end{align*}
 %%%% \end{subequations}
    \]
    See also \eqref{EQQ6}.

    \section{The symplectic group \Sp}\label{SPK}
    \subsection{Basics}

The group $\text{Sp}(n,\K)$ admits a  matrix realization in 
$M\in\text{M}(2n,\K)$, where $\K$ is $\R$ or $\C$,
verifying the relation
\begin{equation}\label{XXX}
 M^tJ_nM=J_n, \quad J_n:= \left(\begin{array}{cc} \UUN & \un\\ -\un &
                                                                      \UUN \end{array}\right).
\end{equation}

If there is no possibility of confusion, we denote $J_n$ just with
$J$.

Let us consider a matrix 
\begin{equation}\label{Real}
M= \left(\begin{array}{cc} a& b\\ c&  d\end{array}\right),\quad
a,b,c,d\in M(n,\R).\end{equation}
It is easy to prove  \cite{frei}-\cite{fol},  \cite{sieg} that
\begin{Remark}
  a) If $M\in\mathrm{Sp}(n,\R)$, then $M$ is similar with
  $M^t$ and $M^{-1}$ and also $J\in\Sp$.
  
b) If $ M\in \Sp$ is as in \eqref{Real},  then the
matrices $a,b,c,d$ in \eqref{Real}
  verify  the sets of  equivalent conditions 
\begin{subequations}\label{simplecticR}
\begin{align}
{\mbox{~~}} & ab^t- ba^t = \UUN,~  ad^t-bc^t =\un, ~ cd^t-dc^t=\UUN; \label{33a}\\
 {\mbox{~~}} &a^tc-c^ta=\UUN, ~ a^td-c^tb=\un,~ b^td-d^tb=\UUN.
\end{align}
\end{subequations}
c) If  $M\in\Sp$ has the form  \eqref{Real}, then   \begin{equation}\label{MINV}
  M^{-1}=\left(\begin{matrix} d^t & -b^t\\ -c^t &
      a^t\end{matrix}\right) .\end{equation}

d) The matrices in   $\mathrm{Sp}(n,\R)$ have the determinant
1.

e) The following  subsets of  $\mr{GL}(2n,\R)$
are subgroups of \Sp
\begin{equation*}N \!=\! \left\{\!\left(\begin{array}{cc}\un & A\\ \mathbb{O}_n &
                                                  \un\end{array}\right): A\!=\!A^t\right\},
                                              \tilde{N}=\left\{\left(\begin{array}{cc}\un
                                                               & \mathbb{O}_n
                                                               \\ B &
                                                                      \un
                                                                      \end{array}\right)
                                                                    : B\!=\!B^t
                                                                  \right\},
                                                                 \end{equation*}
                                                                \begin{equation*}
                                                                  D \!=\!
                                                                  \left\{\left(\begin{array}{cc}C
                                                                           &
                                                                           \mathbb{O}_n\\
                                                                 \mathbb{O}_n
                                                                           &
                                                                             (C^t)^{-1} 
                                                \end{array} \right): C\in\mr{GL}(n,\R)\right\}.
                                            \end{equation*}
\Sp~ is generated by $D\cup\tilde{N}\cup \{J\}$
                                            and $D\cup{N}\cup \{J\}$.
\end{Remark}
Using  \eqref{MINV}  it can be shown that the matrix
$\mc{M}\in\mathrm{Sp}(n,\R)\cap\mathrm{O}_{2n}$ has  the expression
\begin{equation}\label{msimor}\mc{M}=\left(\begin{array}{cc} X & Y \\ -Y & X\end{array}\right),
~~ X^tX+ Y^tY=XX^t+YY^t=\un,~~ X^tY=Y^tX,~~ YX^t=XY^t.\end{equation}
If $\mc{M}\in M(2n,\R)$  has the properties  \eqref{msimor},
let
 \begin{equation}\label{MPR} 
\mc{M}':=X+\ii Y\in M(n,\C),
\end{equation}
and 
\begin{Remark}\label{REM2} The correspondence $\mc{M}\rightarrow \mc{M}'$ of \eqref{msimor}
  with \eqref{MPR} 
  is a  group 
  isomorphism  
  %%%%% 5\begin{equation}\label{ISOUN}
  \[
    \mathrm{Sp}(n,\R)\cap\mathrm{O}_{2n}\approx\text{U}(n).
  \]
%%%%%\end{equation}
\end{Remark}
\subsection{The real symplectic algebra $\got{sp}(n,\R)$}

The real symplectic Lie algebra $\got{g}=\got{sp}(n, \R )$ is a real form
of the simple Lie algebra $ \got{sp}(n, \C )$ of type $\got{c}_n$
and $ X\in \got{sp}(n, \R )$ if $X^t J+JX=0$, or equivalently
\begin{equation}\label{xXx}
X=\left( \begin{matrix} a & b \\c & -a^t\end{matrix}\right),\quad b=b^t,\quad c=c^t,
\end{equation}
 where $a,b,c\in M(n,\R )$, and similarly for
 $ \got{sp}(n, \C )$.
 
We write an element $X$ (\ref{xXx})  as
%%%%%% \begin{equation}\label{VCV}
\[
X=\sum_{i,j} a_{ij}H_{ij}+2\sum_{i<j}(b_{ij}F_{ij}+ c_{ij}G_{ij})+
\sum_{i=j}(b_{ij}F_{ij}+ c_{ij}G_{ij}),\quad 1\le i,j\le n, 
\]
%%%%%\end{equation}
\begin{equation}\label{HFV}
  H_{ij}:=\left(\begin{array}{cc} E_{ij} & \UUN \\ \UUN & -
    E_{ji}\end{array}\right), 2F_{ij}:= \left(\begin{array}{cc} \UUN &   E_{ij}+E_{ji}\\ \UUN &
   \UUN  \end{array}\right) ; 
 2G_{ij}:= \left(\begin{array}{cc} \UUN &  \UUN\\  E_{ij}+E_{ji} & \UUN
\end{array}\right). \end{equation}

 In the matrix realization (\ref{xXx}),  the real algebra $\got{sp}(n,\R)$  has the $2n^2+n$ generators
\begin{equation}\label{HFG}
 H_{ij}, \quad F_{ij}, \quad G_{ij}, \quad
1\le i\le j\le n.
\end{equation}

\subsection{ $\mc{X}_n$ as Hermitian symmetric space} 

We briefly recall some well
known facts about Hermitian symmetric
spaces \cite{multi,helg,wolf}. We use the notation

\smallskip
$X_n$:\quad Hermitian symmetric space of noncompact type,
$X_n= G_0/K$;

$X_c$:\quad compact dual of $X_n$, $X_c= G_c/K$; 

$G_0$:\quad real Hermitian group;

$G=G_0^{\C}$:\quad  the complexification
of $G_0$;

$P$:\quad
a parabolic subgroup of $G$;

$K$:\quad maximal compact subgroup of $G_0$;

$G_c$:\quad compact real form of $G$.

The compact manifold $X_c$ of $\frac{n(n+1)}{2}$-complex dimension
has a complex structure inherited from the identification of  $X_c$
with $ G/P $. The group $G_c$ acts transitively on $X_c$  with
isotropy group $
K= G_0\cap P= G_c\cap P$.

$X_n =G_0/K= G_n(x_0)$ is open in $X_c$, where $x_0$ is a base
point of $G$ corresponding to $K$. If $\{e_1,\dots,e_{2n}\}$ is a base of $\C^{2n}$, 
in our case we take  $x_0=e_{n+1}\wedge\cdots\wedge e_{2n}\in X_n$ as base point, and 
$ G_0= \operatorname{Sp}(n,\R )\approx \operatorname{Sp}(n,\R )_{\C}$.

$X_c$ includes $X_n$ under
Borel embedding $X_n\subset X_c: gK\mapsto gP$, $g\in G_0$.

The hermitian form on $\C^{2n}$
$$<u,v>=-\sum_{j=1}^nu^j\bar{v}^j+\sum_{k=1}^nu^{n+k}\bar{v}^{n+k}$$
specifies the indefinite unitary group $\text{U}(n,n)$, hence the transformation  group  $\operatorname{Sp}(n,\R )_{\C}$ acting on $X_n$ 
$$ G:=\operatorname{Sp} (n,\C
),~ G_c:= \operatorname{Sp} (n)= \operatorname{Sp} (n, \C )\cap
\operatorname{U}(2n)\subset \operatorname{SU}(2n),~ K:=
\operatorname{U}(n).$$
We have also
%%%%%% \begin{equation}\label{parab}
\[
  P:=\left\{g\in G:g(x_0)=x_0\right\}= \left\{\left(\begin{matrix} a & \UUN\\
        c & d \end{matrix}\right): a^t c=c^ta, \ a^t d = \un \right\}.
\]
%%%%%%\end{equation}
Let us consider also
%%%%%% 5\begin{equation}\label{MT}
\[
\got{m}^+:= \left\{\left(\begin{matrix}\UUN & b \\ \UUN & \UUN
    \end{matrix}\right): b^t=b \right\}, b\in M(n,\C).
\]
%%%%%%%%\end{equation}
Then
\begin{equation}\label{HCE}
W\mapsto \hat{W}\!=\!\left(\begin{matrix}\UUN & W\\ \UUN & \UUN\end{matrix}
\right),\xi(W)\!=\!( \exp\hat{W})x_0\!=\! v_1\wedge\cdots \wedge v_n,
(v_1,\cdots,v_n)\!=\!\left(\begin{array}{c} W\\\un\end{array}\right),
\end{equation}
and   $\xi$ maps the symmetric $n\times n$ matrices $W$
 of $\got{m}^+$
such that
$\un-W\bar{W}>0$ onto a dense open subset of $X_c$
that contains $X_n$. 

$X_n$ is a Hermitian symmetric space of type CI (cf.~Table V, p.~
518, in \cite{helg}), identified with the  symmetric bounded
domain of type II, $\got{R}_{II}$ in Hua's notation \cite{hua}.

 Let us denote by $\mc{X}_n$ the set
\begin{equation}\label{SGUP}\mc{X}_n:=\{v\in M(n,\C)| v=s+\ii  r, s,
r\in M(n,\R), 
r>0,
s^t=s; r^t=r\} . \end{equation}
\begin{Remark}\label{direct}The action \eqref{conf1}  of  $\mathrm{Sp}(n,\R)$ on the
  Siegel upper half space  $\mc{X}_n$
\begin{equation}\label{conf1}
v_1  = M(v)=(av+b)(cv+d)^{-1}=(vc^t+d^t)^{-1}(va^t+b^t).
\end{equation}
 is a transitive one. The
  correspondence
  %%%%%% \label{Psicor}\begin{equation}
  \[
\zeta: \mc{X}_n\rightarrow X_n=\mathrm{Sp}(n,\R)/K, K=
  \mathrm{Sp}(n,\R)\cap\mathrm{O}_{2n}; v\mapsto M_{X+\ii Y}
  K,
\]
%%%%%%%%\end{equation} 
where $ M_{X+\ii Y}$ is defined in \eqref{MXY}, is a 1-1 map
  which realizes the Siegel upper half space \eqref{SGUP}  as the
  homogenous manifold 
$X_n$.
\end{Remark}
\begin{proof}
Firstly it is proved that the matrix $cv+d$ in \eqref{conf1} is
invertible, see e.g. \cite[pp 1-11]{pedro}. Then  it is proved that
$M(v)\in\mc{X}_n$ \cite{pedro,sieg}.

It is find a symplectic map that sends $\ii \un$ to $X+\ii
Y\in\mc{X}_n$, $Y>0$ as the  composition of the symplectic maps
$V\rightarrow \sqrt{Y}V\sqrt{Y}$ and  $V\rightarrow V+X$  associated
with the symplectic matrices $$\left(\begin{array}{cc} \sqrt{Y}& \UUN \\    \UUN &\sqrt{Y^{-1}} \end{array}\right) \quad\text{and}\quad
\left(\begin{array}{cc} \un & X \\ \UUN & \un\end{array}\right).$$
We introduce the notation \begin{equation}\label{MXY}M_{X+\ii Y}:=
\left(\begin{array}{cc} \un  & X\\ \UUN  & \un\end{array}\right)
\left(\begin{array}{cc}  \sqrt{Y}& \UUN \\ \UUN & \sqrt{Y^{-1}}
     \end{array}\right) = \left(\begin{array}{cc}\sqrt{Y}
    &X \sqrt{Y^{-1}}\\ \UUN & \sqrt{Y^{-1}} \end{array}\right).\end{equation}
The subgroup of $\mathrm{Sp}(n,\R)$ which   stabilizes $\ii \un\in\mc{X}_n$ is
the subgroup of orthogonal symplectic matrices of the form \eqref{msimor}.
\end{proof}
Note that an argument similar with that used in Remark \ref{direct}
was given in \cite{ali}, following \cite{si}.
\subsection{Pre-Iwasawa and modified   pre-Iwasawa decompositions}
We recall that the  Iwasawa decomposition \cite[Ch VI, \S3]{helg} of  ${\mr{SL}}(2,\R)$ is
used for the  so called $S$-parametrization of the  Jacobi group
$G^J_1(\R)$, see \cite[p 4]{bern84}, \cite[p 15]{BB}, \cite[p 7]{bs}.

In the present paper we find a similar decomposition  for  $G^J_n(\R)$.

We recall the Iwasawa
decomposition \cite[Ch. VI, \S3]{helg} of $\Sp\ni G=KAN$ 
\cite[p. 285]{terras} corresponds to 
$$K:=\left\{\left(\begin{array}{cc} A &B\\ -B &A\end{array}\right)\in
   {\text{ U}}(n)\right\},$$
$$A:=\left\{\text{diag}(a_1,\dots,a_n,a^{-1}_1,\dots,a^{-1}_n);
    a_1,\dots,a_n>0\right\},$$
$$N:=\left\{\left(\begin{array}{cc}A & B \\ \UUN& 
                  (A^{-1})^t\end{array}\right):\text{ A real unit upper
                triangular}, AB^t=BA^t\right\}.$$
          For Cholesky factorisation see \cite[p. 287]{terras} and
          \cite{tam}; 
         for  QR 
         decomposition see \cite[p. 143]{ser}.  We also mention the
         Iwasawa decomposition
         for $G^J_n(\R)$  was considered  in \cite[\S~9.1.2]{yang}. 
         
Following  the method of  \cite{arw} and \cite[\S 2.2.2]{goss}, we
find similarly 
\begin{lemma}\label{R313}{\emph{\bf Pre-Iwasawa decomposition}}
  Let as consider the pre-Iwasawa decomposition of $ M\in \Sp $
  \begin{equation}\label{IWSP} M= \left(\begin{array}{cc} a &b  \\c
                                                    &d\end{array}\right)=\left(\begin{array}{cc}
                                                                                 \un
                                                                                 &x \\
                                                                                 \UUN
                                                                                 &\un\end{array}\right)
                                                                             \left(\begin{array}{cc}
                                                                                     y
                                                                                     &\UUN \\
                                                                                     \UUN
                                                                                     &y^{-1}\end{array}\right)
                                                                                  \left(\begin{array}{cc}
                                                                                     X
                                                                                     &Y 
                                                                                     \\-Y
                                                                                     &X\end{array}\right),
                                                                               \end{equation}
 where the last matrix in \eqref{IWSP}    in  ${\emph{U}}(n)$   verify
 \eqref{msimor}.  

 We
 find  \begin{equation}\label{a0c0}y=(dd^t+cc^t)^{-\frac{1}{2}},~X-\ii
   Y=y(d+\ii c).
 \end{equation}
 Let us also define
 \[
   t:=y^2(db^t+ca^t)y^{-1}=(bd^t+ac^t)y.
   \]
 The matrices $y$ and $x:=ty$ are symmetric,  $y$ is positive definite,  and
 all the factors in \eqref{IWSP} are unique. We have also 
 \begin{equation}\label{Dx}
   x=(dd^t+cc^t)^{-1}(db^t+ca^t)=(bd^t+ac^t)(dd^t+cc^t)^{-1}.
 \end{equation} 
 The inverse of the  transform $(a,b,c,d)\rightarrow (x,y,X,Y)$ in
 equations \eqref{a0c0}, \eqref{Dx}
 is
 \begin{equation}\label{c0a0}
   a= yX-xy^{-1}Y,~b=yY+xy^{-1}X,~c=-y^{-1}Y, ~d=y^{-1}X.\end{equation}
\end{lemma}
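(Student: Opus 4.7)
My plan is to multiply out the right-hand side of \eqref{IWSP} and match blocks with $M$, using the symplectic identities \eqref{simplecticR} to establish positivity of $y$, orthogonality of the last factor, and symmetry of $x$. A direct computation gives
\begin{equation*}
\begin{pmatrix} \un & x \\ \UUN & \un \end{pmatrix}\begin{pmatrix} y & \UUN \\ \UUN & y^{-1} \end{pmatrix}\begin{pmatrix} X & Y \\ -Y & X \end{pmatrix} = \begin{pmatrix} yX - xy^{-1}Y & yY + xy^{-1}X \\ -y^{-1}Y & y^{-1}X \end{pmatrix},
\end{equation*}
which is exactly \eqref{c0a0}. Reading off the bottom row forces $c = -y^{-1}Y$ and $d = y^{-1}X$, so the orthogonality condition $XX^t + YY^t = \un$ from \eqref{msimor} forces $y^{-2} = dd^t + cc^t$, the first formula in \eqref{a0c0}.

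For existence, I would first note that $Q := dd^t + cc^t$ is positive definite: invertibility of $M$ forces the $n \times 2n$ block $(c \mid d)$ to have rank $n$, and then $Q = (c \mid d)(c \mid d)^t > 0$, so $y := Q^{-1/2}$ is well defined and uniquely positive. Setting $X := yd$, $Y := -yc$ yields $X - \ii Y = y(d + \ii c)$ as claimed; the identities $XX^t + YY^t = y(dd^t + cc^t)y = \un$ and $YX^t - XY^t = y(dc^t - cd^t)y = 0$ (the latter using $cd^t = dc^t$ from \eqref{33a}) show the last factor lies in $\Sp \cap \mathrm{O}_{2n}$, so $X + \ii Y \in \mr{U}(n)$ by Remark \ref{REM2}. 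To pin down $x$, I rewrite the top-row matching as $xc = a - y^2 d$ and $xd = b + y^2 c$; post-multiplying by $c^t$ and $d^t$ respectively and summing, the $y^2$ terms cancel via $cd^t = dc^t$, leaving $xy^{-2} = bd^t + ac^t$, which is the second equality of \eqref{Dx}.

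The crux of the argument, and the step I expect to require real attention, is verifying that this $x$ is symmetric, equivalently that the two expressions for $t$ in \eqref{a0c0} coincide. Both reduce to the commutation identity $A y^{-2} = y^{-2} A^t$ with $A := db^t + ca^t$. Expanding,
\begin{equation*}
A y^{-2} - y^{-2} A^t = d(b^t d - d^t b) d^t + d(b^t c - d^t a) c^t + c(a^t d - c^t b) d^t + c(a^t c - c^t a) c^t,
\end{equation*}
and the four relations $b^t d = d^t b$, $a^t c = c^t a$, $a^t d - c^t b = \un$ (hence $b^t c - d^t a = -\un$), together with $cd^t = dc^t$ from \eqref{simplecticR}, collapse the right-hand side to $cd^t - dc^t = 0$. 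Granted this commutation, $x = y^2 A = A^t y^2$ is manifestly symmetric, and $t = xy^{-1}$ admits the two stated forms. Uniqueness is then immediate: in any valid decomposition the bottom row determines $y$, $X$, $Y$ (via the positive square root of $(dd^t + cc^t)^{-1}$), and then the top row determines $x$.
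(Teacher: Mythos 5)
Your proof is correct and complete: the block multiplication giving \eqref{c0a0}, the positivity of $dd^t+cc^t$ via the full rank of the bottom block row $(c\;\,d)$, the verification that $(X,Y)$ satisfies \eqref{msimor}, and the symmetry of $x$ reduced to $y^2(db^t+ca^t)=(bd^t+ac^t)y^2$ via the relations \eqref{simplecticR} are all accurate. The paper itself gives no proof of Lemma \ref{R313}, deferring to the method of Arvind et al.\ and de Gosson; your direct verification is exactly the standard argument those references supply, so there is nothing to compare beyond noting that you have filled in the omitted details correctly.
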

  In the case of $\text{SL}(2,\R)$, the expresion \eqref{c0a0} corresponds to
 (46.b) in \cite{SB19} if we replace $y\rightarrow y^{1/2}$ and take
 $X=\cos\theta,~Y=\sin\theta$.
 
The first factor  in \eqref{IWSP} corresponds  to the ``free
propagation subgroup'' \cite{arw}.

In the next Lemma we modify the pre-Iwasawa decomposition of ${\mr{
  Sp}}(n,\R)$ so that it coincides with  Iwasawa decomposition of
the group ${\mr {Sp}}(1,\R)\approx {\mr {SL}}(2,\R)$ in \cite[p
9]{bs}.

We get
 \begin{lemma}\label{L5}{\emph{\bf Modified pre-Iwasawa decomposition}}
           The action   of $M\in\Sp$ 
           \eqref{Real} on  $\mc{X}_n$,
           expressed in the   parameters of the  pre-Iwasawa decomposition in
          {\emph{ Lemma  \ref{R313}}}
          \begin{equation}\label{SCGa}(a,b,c,d)\times (x',y',X',Y')\rightarrow
           (x_1,y_1,X_1,Y_1),\end{equation} where  $x',y'\in M(n,\R),~x'=(x')^t,y'=(y')^t,y'>0$,
         and
          \begin{subequations}\label{c0a01}
           \begin{align}
             a &= y^{1/2}
             X-xy^{-1/2}Y,~b=y^{1/2}Y+xy^{-1/2}X,~c=-y^{-1/2}Y,
                 ~d=y^{-1/2}X,\label{c0a01A}\\
             x &= y(db^t+ca^t),~ y=(dd^t+cc^t)^{-1},~X-\ii
                 Y=y^{\frac{1}{2}}(d+\ii c), 
           \end{align}
         \end{subequations}
           is given by the formulas
           \begin{equation}\label{SPRT}
         \begin{split} x_1+\ii y_1&=[c(y'+x'{y'}^{-1}x')c^t+d(y')^{-1}d^t+cx'(y')^{-1}d^t+d(y')^{-1}x'c^t]^{-1}\\
           &\times[c(y'+x'(y')^{-1}x')a^t+cx'(y')^{-1}b^t+d (y')^{-1}x'a^t+d(y')^{-1}b^t+\ii],
         \end{split}
       \end{equation}
      \begin{equation}\label{X1Y1Z1}
         \begin{split}
         X_1-\ii Y_1 &
         =(y_1)^{1/2}\{(cx'+\!d)(y')^{-1/2}X'+c(y')^{1/2}Y'\\
           &+\ii[c(y')^{1/2}X'-(cx'+d)(y')^{-1/2}Y']\},
       \end{split}
     \end{equation}
     while the action given
           by \eqref{conf1} $M\times (x',y')\rightarrow (\rm{x}_1,\rm{y}_1),
           v_1:=\rm{x}_1+\ii \rm{y}_1$ expressing the linear
           fractional \eqref{conf1}
           transformation is 
          \begin{equation}
           \begin{split}\label{conf2}\rm{x}_1+\ii \rm{y}_1& = (\bar{v'}c^t+d^t)^{-1}(\frac{B}{2}+\ii y')(cv'+d)^{-1},\\
           B &=
               2\bar{v'}a^tcv'+\bar{v'}(c^tb+a^td)+(b^tc+d^ta)v'+2b^td.
             \end{split}
           \end{equation} 
           The modified 
           pre-Iwasawa decomposition \eqref{c0a01} is compatible with
           the M\"obius transform \eqref{conf2},  i. e.
           \begin{equation}\label{X1Y10}
            x_1+\ii y_1\equiv \rm{x}_1+\ii \rm{y}_1.\end{equation}
The transformation of the matrices associated as in {\emph{Remark
\ref{REM2}}}  to the pair   $(X,Y)$  defined in \eqref{X1Y1Z1} under
the action
\eqref{SCGa} reads
%%%%%% \begin{equation}
\[
  \left(\begin{array}{cc}X_1& Y_1\\-Y_1
                                                                   &
                                                                     X_1 \end{array}\right)=y_1^{\frac{1}{2}}\left[(cx'+d)(y')^{-\frac{1}{2}}\db{
                                                                     1}_{2n}-c(y')^{\frac{1}{2}}J_{2n}\right]\left(\begin{array}{cc}X'&Y'\\-Y'
                                                                   &
                                                                     X'\end{array}\right).
                                                               \]
                                                             %%%%%%%%\end{equation}
 \end{lemma}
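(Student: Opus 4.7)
The plan rests on Remark \ref{direct}: the modified pre-Iwasawa data of a matrix $M\in\Sp$ encode simultaneously the image $M(\ii\un)=x+\ii y\in\mc{X}_n$ and a representative $(X,Y)$ of the stabilizer coset of $\ii\un$. Define $(x_1,y_1,X_1,Y_1)$ to be the modified pre-Iwasawa data of the product $MM'$. Since $M'$ sends $\ii\un$ to $v'=x'+\ii y'$, the $\Sp$-equivariance of the identification gives $MM'(\ii\un)=M(v')=x_1+\ii y_1$. Consequently, the compatibility \eqref{X1Y10} will be automatic once \eqref{conf2} is derived, and the real content of the lemma is the explicit extraction of $(y_1,X_1,Y_1)$ from $MM'$ together with the passage from \eqref{conf1} to \eqref{conf2}.

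First, writing $M'$ via \eqref{c0a01A} and block-multiplying, the lower blocks of $MM'$ come out to
\begin{align*}
c_1 &= c(y')^{1/2}X' - (cx'+d)(y')^{-1/2}Y',\\
d_1 &= c(y')^{1/2}Y' + (cx'+d)(y')^{-1/2}X'.
\end{align*}
Applying the modified formulas $y_1^{-1}=c_1c_1^t+d_1d_1^t$ and $X_1-\ii Y_1=y_1^{1/2}(d_1+\ii c_1)$ from \eqref{c0a01} to these blocks, the unitarity relations $X'(X')^t+Y'(Y')^t=\un$ and $X'(Y')^t=Y'(X')^t$ from \eqref{msimor} cancel the $X'$--$Y'$ cross terms, and the symmetry of $x'$ collapses the remainder into
$$y_1^{-1}=c(y'+x'(y')^{-1}x')c^t+cx'(y')^{-1}d^t+d(y')^{-1}x'c^t+d(y')^{-1}d^t,$$
which is precisely the bracket inverted on the first line of \eqref{SPRT}. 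The identity $Y'+\ii X'=\ii(X'-\ii Y')$ then factors
$$d_1+\ii c_1=\bigl[(cx'+d)(y')^{-1/2}+\ii c(y')^{1/2}\bigr](X'-\ii Y'),$$
yielding \eqref{X1Y1Z1} after separating real and imaginary parts. The closing block-matrix identity of the lemma is the same formula rewritten under the embedding $A+\ii B\leftrightarrow A\,\db{1}_{2n}-B\,J_{2n}$ of Remark \ref{REM2}.

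To derive \eqref{conf2}, write $M(v')=(av'+b)(cv'+d)^{-1}$ and factor $((cv'+d)^\ast)^{-1}=(\bar v'c^t+d^t)^{-1}$ out on the left. A direct expansion using $a^tc-c^ta=0$, $a^td-c^tb=\un$, and $b^td-d^tb=0$ from \eqref{simplecticR} produces
$(\bar v'c^t+d^t)(av'+b)=\tfrac12 B+\ii y'$ with $B$ as in \eqref{conf2}; dividing by $(cv'+d)$ on the right gives \eqref{conf2}, and comparing imaginary parts recovers the same expression for $y_1^{-1}$ found above, which delivers \eqref{X1Y10}. The principal obstacle is the bookkeeping in the second step: the expansion of $c_1c_1^t+d_1d_1^t$ and $d_1+\ii c_1$ generates many mixed quadratic terms that only collapse after simultaneous use of the $\mathrm{O}(2n)\cap\Sp$ relations on $(X',Y')$ and the symmetry of $x'$; the parallel identification of the real part $B$ in the third step is an analogous but disjoint calculation that relies on the complementary subset of the symplectic relations \eqref{simplecticR}. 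Once these simplifications are carried out, the compatibility \eqref{X1Y10} and the reformulation in block form are essentially tautological.
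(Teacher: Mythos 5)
Your proof is correct, but it follows a genuinely different route from the paper's. The paper takes \eqref{SPRT} as the starting point, writes it as $A^{-1}(M+\ii)$, proves the factorization $A=(cv'+d)(y')^{-1}(\bar{v'}c^t+d^t)$ to match the imaginary parts, and then verifies the matrix identity \eqref{MCV}, $M(cv'+d)=(cv'+d)(y')^{-1}\tfrac{B}{2}$, using \eqref{simplecticR} to match the real parts; it never shows where \eqref{SPRT} and \eqref{X1Y1Z1} actually come from. You instead \emph{derive} them: you block-multiply $MM'$ with $M'$ written via \eqref{c0a01A}, apply the formulas of \eqref{c0a01} to the resulting blocks $c_1,d_1$, and let the relations \eqref{msimor} kill the cross terms; the factorization $d_1+\ii c_1=\bigl[(cx'+d)(y')^{-1/2}+\ii c(y')^{1/2}\bigr](X'-\ii Y')$ then gives \eqref{X1Y1Z1} and the closing block identity in one stroke. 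The compatibility \eqref{X1Y10} becomes a consequence of associativity of the action, $(MM')(\ii\un)=M(M'(\ii\un))$, via Remark \ref{direct}, rather than of the computational identity \eqref{MCV}. Your approach buys a conceptual explanation of the compatibility and actually covers the parts of the lemma ((\ref{X1Y1Z1}) and the final block formula) that the paper's proof leaves untouched; the paper's approach is a shorter head-on verification that the two closed formulas coincide. Both derivations of \eqref{conf2} (left-factoring $(\bar v'c^t+d^t)^{-1}$ and expanding with $a^tc=c^ta$, $a^td-c^tb=\un$, $b^td=d^tb$) are the same.

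One small omission: you verify the imaginary part of \eqref{SPRT} (that $c_1c_1^t+d_1d_1^t$ equals the bracket $A$) but never the real part, i.e.\ that $x_1=A^{-1}M$ with $M$ as displayed in \eqref{SPRT}. In your framework this requires the third formula of \eqref{c0a01}, $x_1=y_1(d_1b_1^t+c_1a_1^t)$, and the completely parallel computation $d_1b_1^t+c_1a_1^t=c(y'+x'(y')^{-1}x')a^t+cx'(y')^{-1}b^t+d(y')^{-1}x'a^t+d(y')^{-1}b^t$, where the same cancellations from \eqref{msimor} occur with $a_1,b_1$ in place of $c_1,d_1$. You should state this explicitly; as written, neither your block computation nor your appeal to \eqref{conf2} establishes the real part of \eqref{SPRT}.
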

 \begin{proof}
           This is an easy but long calculation and  we indicate only
           the main steps.

           We write \eqref{SPRT} as
           $$x_1+\ii y_1 = A^{-1}M,$$
           where
           \begin{align*}
           A & :=c(y'+x'{y'}^{-1}x')c^t+d(y')^{-1}d^t+cx'(y')^{-1}d^t+d(y')^{-1}x'c^t,\\
           M & :=c(y'+x'(y')^{-1}x')a^t+cx'(y')^{-1}b^t+d
             (y')^{-1}x'a^t+d(y')^{-1}b^t.%%%%\label{AL2}
           \end{align*}
           Firstly it is proved that $y_1$ defined in \eqref{SPRT} is
           equal with $\mr{y}_1$ in \eqref{conf2} and  then it is
           obtained 
           \[
             A=(cv'+d)y'^{-1}(\bar{v'}c^t+d^t),
           \]
           or \begin{equation}\label{am1}A^{-1}=(\bar{v'}c^t+d^t)^{-1}y'(cv'+d)^{-1}.\end{equation}
           In order to prove that $x_1$ in \eqref{SPRT} is equal with
           $\mr{x}_1$ in \eqref{conf2}, with \eqref{am1}, we have to
           verify that
           $$y'(cv'+d)^{-1}M=\frac{B}{2}(cv'+d)^{-1},$$
           i.e. \begin{equation}\label{MCV}M(cv'+d)=(cv'+d)(y')^{-1}\frac{B}{2}.\end{equation}
           Using equations \eqref{simplecticR}, it is  verified the
           identity  of the imaginary parts of both sides of
           \eqref{MCV} and then the identity of the real parts.
           \end{proof}
       
\section{The real Jacobi group $G^J_n(\R)$}\label{GHHN}

The real Jacobi group 
 $G^J_n(\R)$ has  the composition law
\begin{equation}\label{GJN}(M,(\lambda,\mu,\kappa)) \times (M',(\lambda',\mu',\kappa'))=
  (MM',(\tilde{\lambda}+\lambda',\tilde{\mu}+\mu',\kappa+\kappa'+\tilde{\lambda}\mu'^t-\tilde{\mu}\lambda'^t),
\end{equation}
where $M,M'\in \Sp$ have the form \eqref{Real} and verifies the conditions
of \ref{simplecticR}, $(\lambda,\mu,\kappa)$,
$(\lambda',\mu',\kappa')\in {\rm H}_n(\R)$, and
$(\tilde{\lambda},\tilde{\mu})=(\lambda,\mu)M'$ \cite{gem,LEE03,SA71C,SA80,yang,ZIEG}.

\subsection{The Jacobi group $G^J_n(\R)$ as subgroup of
  $\text{Sp}(n+1,\R)$} 
Let us consider a matrix $M\in\text{Sp}(n,\R)$ as in \eqref{Real} verifying 
\eqref{simplecticR}. Let us introduce the block matrix 
\begin{equation}\label{GABC}g:= \left(\begin{array}{cc} A & B\\C & D\end{array}\right)\in
M(2n+2,\R), \end{equation}
where  the submatrices $A,B,C,D\in M(n+1,\R)$ are defined as in
\eqref{ggmin1}, i.e. 
\begin{equation}\label{LKJ}
A\!:=\! \left(\!\begin{array}{cc}a_{nn} &
                                           \UU_{n1}\\ \lambda_{1n}
                                         &1_{11}\end{array}\!\right),\!
                                     B\!:=\!\left(\begin{array}{cc}b_{nn} &
                                                                       q^t_{n1}\\ \mu_{1n}
                                           &\kappa_{11}\end{array}\!\right)\!,
C\!:=\! \left(\!\begin{array}{cc}c_{nn} &
                                           \UU_{n1}\\ \UU_{1n} & 0\end{array}\!\right)\!, D\!:=\! \left(\!\begin{array}{cc}d_{nn} &
                                           -p^t_{n1}\\ \UU_{1n}
                                         &1_{11}\end{array}
                                       \!\right).
                                   \end{equation}

 We  verify  that indeed 
 \begin{lemma}\label{L44} The matrix $g$ defined in
   \eqref{GABC}, \eqref{LKJ} is in $\SPn$.
 \end{lemma}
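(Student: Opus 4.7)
The plan is to verify directly that $g$ in \eqref{GABC}--\eqref{LKJ} satisfies the symplectic condition $g^t J_{n+1} g = J_{n+1}$ by working in $(n+1)\times(n+1)$ blocks. By the block-form characterization recorded in \eqref{simplecticR} (applied to $\SPn$ with the blocks $A,B,C,D$ playing the role of $a,b,c,d$), this is equivalent to the three matrix identities
\[
A^t C = C^t A,\qquad A^t D - C^t B = \mathbb{1}_{n+1},\qquad B^t D = D^t B.
\]
Each of these is a $(n+1)\times(n+1)$ identity that I will split into its four sub-blocks and check entry-wise.

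First, I would compute $A^tC$ and $C^tA$. Because the second row of $C$ and the second column of $C$ vanish, only the top-left $n\times n$ block survives, yielding $a^tc$ and $c^ta$; these agree by \eqref{simplecticR} applied to $M\in\Sp$. Second, for $A^tD - C^tB$ the upper-left block reduces to $a^td - c^tb = \mathbb{1}_n$ (again by \eqref{simplecticR}); the upper-right $n\times 1$ block collapses to $\lambda^t - a^tp^t - c^tq^t$, the bottom-left block is automatically zero, and the bottom-right scalar is $1$. Third, for $B^tD - D^tB$ the upper-left block is $b^td - d^tb = 0$ by \eqref{simplecticR}; the upper-right and lower-left blocks read $\mu^t - b^tp^t - d^tq^t$ and $pb + qd - \mu$ respectively; and the bottom-right scalar is $pq^t - qp^t$.

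The key input needed to finish is the defining relation $(p,q) = XM^{-1}$ with $X=(\lambda,\mu)$, i.e.\ $(\lambda,\mu)=(p,q)M$, which unpacks to
\[
\lambda = pa + qc,\qquad \mu = pb + qd.
\]
Transposing these gives precisely $\lambda^t = a^tp^t + c^tq^t$ and $\mu^t = b^tp^t + d^tq^t$, which annihilate the two off-diagonal mixed blocks appearing in the calculations above. The remaining scalar $pq^t - qp^t$ vanishes automatically since $p,q\in M(1,n,\R)$, so that $pq^t$ and $qp^t$ are equal $1\times 1$ matrices.

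I do not expect a real obstacle: the verification is a bookkeeping exercise in block multiplication. The only point deserving attention is organizing the check so that each use of \eqref{simplecticR} and each use of $(p,q)=XM^{-1}$ is made explicit, and noting that the bottom-right entry is handled by the scalar symmetry $pq^t=qp^t$ rather than by any symplectic identity. Once these three block identities are established, \eqref{simplecticR} (in the form stated for $\SPn$) concludes $g\in\SPn$.
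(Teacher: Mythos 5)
Your proof is correct and takes essentially the same route as the paper: a direct block computation verifying the symplectic condition for $g$, using the identities \eqref{simplecticR} for $M$ together with $(\lambda,\mu)=(p,q)M$ to kill the mixed blocks. The only cosmetic difference is that the paper checks $gJ_{n+1}g^t=J_{n+1}$ (the row-form conditions) while you check $g^tJ_{n+1}g=J_{n+1}$ (the column-form conditions); these are equivalent by part b) of the paper's first Remark.
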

\begin{proof}
We calculate the submatrices of the matrix $L$
$$L:=g J_{n+1}g^t=\left(\begin{array}{cc} U& V \\Z &T\end{array}\right).$$
We find $$U=\UU_{n+1},~V=\db{1}_{n+1},~Z=-\db{1}_{n+1},~T=\UU_{n+1},$$
i.e. $L=J_{n+1}$ and  the conditions  \eqref{XXX} are verified. 
\end{proof}

 If $g=(M,X,\kappa)\in G^J_n(\R)$, then $g^{-1}=(M^{-1},-Y,-\kappa)$,
 i.e., with the conventions in \eqref{GABC}, \eqref{LKJ}, we have the
  following representation in \SPn, see also \eqref{ggmin1}  
  \begin{equation}\label{ggmin}
    g\!=\!\left(\!\begin{array}{cccc}a & 0 & b& q^t\\
              \lambda & 1& \mu &\kappa\\
              c & 0 &  d & -p^t\\
                            0 & 0& 0& 1\!\end{array}\!\right),g^{-1}\!=\!
                        \left(\!\begin{array}{cccc}d^t & 0 & -b^t& -\mu^t\\
              -p & 1& -q &-\kappa\\
              -c^t & 0 &  a^t & \lambda^t\\
              0 & 0& 0& 1\!\end{array}\!\right)\!,\!M\!=\!\left(\!\begin{array}{cccc}a&b\\c&d\end{array}\!\right)\in\!\Sp. \end{equation}
\subsection{The Lie algebra $\got{g}^J_n(\R)$}\label{GOTJ}
Now we introduce a set of matrices that form  a base for the Lie algebra
$\got{g}^J_n(\R)$ embedded in $\got{sp}(n+1,\R)$ as in Lemma \ref{L44} 
which in the case $n=1$ corresponds to the base $F,G,H,P,Q,R$ in
\cite{SB19}
\begin{subequations}\label{alg1}
  \begin{align}
    2(F_{IJ})_{ij} & :=
                     \delta_{I,i}\delta_{J,n+1+j}+\delta_{I,j}\delta_{J,n+1+i},~I,J=1,\dots,2n+2; i,j=1,\dots,n;\\
    2(G_{IJ})_{ij}& :=\delta_{I,n+1+i}\delta_{J,j}+\delta_{I,n+1+j}\delta_{J,i},\\
    (H_{IJ})_{ij}& :=\delta_{I,i}\delta_{J,j}-\delta_{I,n+1+j}\delta_{J,n+1+i},\\
    (P_{IJ})_{ij} & :=
                    \delta_{I,n+1}\delta_{J,j}-\delta_{I,n+1+i}\delta_{J,2n+2},\\
    (Q_{IJ})_{ij} &
                   :=\delta_{I,i}\delta_{J,2n+2}+\delta_{I,n+1}\delta_{J,n+1+i},\\
    R_{IJ}
& :=\delta_{I,n+1}\delta_{J,2n+2} .   
  \end{align}
\end{subequations}
With the conventions \eqref{GABC}, \eqref{LKJ}, the first three
matrices in  \eqref{alg1} can be
written down as
\begin{subequations}\label{alg2}
    \begin{align}
   \label{F1}   2F_{ij}& :=\left(\begin{array}{cccc} 0 & 0 & E_{ij}+E_{ji}& 0\\
                      0& 0& 0& 0\\
                      0& 0& 0& 0\\
                       0& 0& 0& 0\\
                    \end{array}\right),~i,j=1,\dots,n;\\\label{G1}   2G_{ij} & :=\left(\begin{array}{cccc} 0 & 0 & 0& 0\\
                      0& 0& 0& 0\\
                       E_{ij}+E_{ji}& 0& 0& 0\\
                                                           0& 0& 0& 0\\ \end{array}\right),\\
   \label{H1}   H_{ij} & :=  \left(\begin{array}{cccc} E_{ij} & 0 & 0& 0\\
                      0& 0& 0& 0\\
                       0 & 0& -E_{ji}& 0\\
                                                           0& 0& 0&
                                                                    0\\ \end{array}\right),
    \end{align}
  \end{subequations}
  while the matrices $P_p,Q_q$, $p,q=1,\dots, n$, $R$ have been already defined in
  \eqref{algp}.
  
    An element  $X\in \got{g}^J_n(\R)$ can be written as  matrix of
    \SPn~ in the base \eqref{alg2} as 
    %%%%%% \begin{equation}\label{XSUMJ}
    \[
      \begin{split}
      X & =\sum_{i,j=1}^na_{ij}H_{ij}+2\sum_{1\le i <j\le n}(b_{ij}F_{ij}+
      c_{ij}G_{ij}) \\ &+\sum_{1\le i =j\le n}(b_{ij}F_{ij}+
      c_{ij}G_{ij})+\sum_{i=1}^n(p_iP_i+q_iQ_i)+rR,~
      b=b^t,~c=c^t. \end{split}
  \]
%%%%%%%%\end{equation}
It can be verified that
    \begin{lemma}The commutation relations of  the generators \eqref{alg2} of the
Jacobi algebra $\got{g}^J_n(\R)$ are
    \begin{subequations}\label{COMM}
      \begin{align}
         [H_{kl},F_{ij}] & =\delta_{lj}F_{ik}+\delta_{li}F_{kj},\\
        [G_{ij},H_{kl}] & =\delta_{ki}G_{lj}+\delta_{kj}G_{li},\\
      4[F_{ij},G_{kl}] &  =
                         \delta_{li}H_{kj}+\delta_{jl}H_{ik}+\delta_{jk}H_{il}+\delta_{ik}H_{jl},\\
        [P_p,Q_q] & = 2\delta_{pq}R,\\
        2[P_p,F_{ij}] & =           \delta_{pi}Q_j+\delta_{pj}Q_i,\\
         2[Q_q,G_{ij}] & = \delta_{iq}P_j+\delta_{jq}P_i,\\
        [P_p,H_{ij}] &  = \delta_{pi}P_j,\\
        [H_{ij},Q_q] & = \delta_{jq}Q_i .
   \end{align}
 \end{subequations}
 \end{lemma}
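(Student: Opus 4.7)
The plan is to verify each identity in \eqref{COMM} by a direct block-matrix computation, using the explicit forms of the generators given in \eqref{algp} and \eqref{alg2} together with the elementary identity $E_{ij}E_{kl}=\delta_{jk}E_{il}$ recorded in the Notation. The eight relations split naturally into three groups matching the semidirect-product structure $\got{g}^J_n(\R)\cong\got{h}_n(\R)\rtimes\got{sp}(n,\R)$, and I would treat them in that order.

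First I would dispose of the three purely symplectic relations $[H_{kl},F_{ij}]$, $[G_{ij},H_{kl}]$ and $[F_{ij},G_{kl}]$. The matrices $H_{ij}$, $F_{ij}$, $G_{ij}$ defined in \eqref{alg2} have all their nonzero entries inside the $2n\times 2n$ block obtained by deleting rows and columns $n+1$ and $2n+2$ from the ambient $(2n+2)\times(2n+2)$ matrix, and this block is exactly the generator of $\got{sp}(n,\R)$ from \eqref{HFV}. Hence these three relations reduce to the standard $\got{c}_n$ commutators, and each is a short product of elementary matrices. For instance, in $[H_{kl},F_{ij}]$ only the upper-right $n\times n$ block is nonzero, and
\[
\tfrac{1}{2}\bigl[E_{kl}(E_{ij}+E_{ji})+(E_{ij}+E_{ji})E_{lk}\bigr]=\delta_{li}F_{kj}+\delta_{lj}F_{ik},
\]
upon expanding with $E_{ab}E_{cd}=\delta_{bc}E_{ad}$ and regrouping into symmetrised pairs. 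The factor $4$ in $[F_{ij},G_{kl}]$ comes from the two hidden $\tfrac{1}{2}$ factors in the definitions $2F_{ij}=\cdots$ and $2G_{kl}=\cdots$.

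Next I would handle the mixed relations $[P_p,F_{ij}]$, $[Q_q,G_{ij}]$, $[P_p,H_{ij}]$ and $[H_{ij},Q_q]$. The matrices $P_p$ and $Q_q$ have support only in row $n+1$ and column $2n+2$ (together with one extra entry each in the lower symplectic block), so every such commutator produces at most two nonzero entries, each obtained from a single elementary contraction such as $E_pE_{ij}=\delta_{pi}E_j$ or $(E_{ij}+E_{ji})E_q^t$, living either in the Heisenberg row or the Heisenberg column. Reading these entries off and comparing with the corresponding support pattern of $P_\bullet$ and $Q_\bullet$ yields precisely the right-hand sides of \eqref{COMM}.

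Finally, $[P_p,Q_q]=2\delta_{pq}R$ follows from a one-line calculation: the only nonzero contribution to $P_pQ_q$ is the product $E_p\cdot E_q^t=\delta_{pq}$ at position $(n+1,2n+2)$, exactly the position of $R$ in \eqref{R11}, and $-Q_qP_p$ contributes a second identical entry, giving the factor $2$. I expect the main (and only) obstacle to be bookkeeping: the prefactors $\tfrac{1}{2}$ hidden in the definitions of $F_{ij}$ and $G_{ij}$, combined with the symmetrisations $E_{ij}+E_{ji}$, must be tracked with care so that the numerical coefficients (in particular the $4$ and the $2$) on the right-hand sides of \eqref{COMM}, together with the exact index placement on the $H_{\bullet\bullet}$ generated by $[F,G]$, come out correctly.
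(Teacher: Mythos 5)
Your proposal is correct and coincides with the paper's own (essentially unwritten) proof: the lemma is justified there only by the phrase ``It can be verified that'', and the intended verification is precisely the block-matrix computation with $E_{ab}E_{cd}=\delta_{bc}E_{ad}$ that you describe, your sample calculations for $[H_{kl},F_{ij}]$ and $[P_p,Q_q]$ being accurate. One caveat worth recording: carrying out the $[F_{ij},G_{kl}]$ computation actually yields $4[F_{ij},G_{kl}]=\delta_{jk}H_{il}+\delta_{jl}H_{ik}+\delta_{ik}H_{jl}+\delta_{il}H_{jk}$, whose last term differs from the printed $\delta_{li}H_{kj}$; since $F_{ij}=F_{ji}$ the right-hand side must be symmetric under $i\leftrightarrow j$, which the printed formula is not, so this is a typo in the statement (invisible for $n=1$) rather than a flaw in your method.
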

 The commutation relations \eqref{COMM}  of the generators of
 $G^J_n(\R)$ represent the  generalization of the corresponding commutation relations  (3,4), (5.1) and
 (8.20)  of  the generators  of $G^J_1(\R)$ in \cite{SB19}.

\subsection{The action}
Following \cite[\S 5]{coord}, let us consider the restricted real group
$G^J_n(\R)_0$ consisting of elements of the form defined in
\eqref{GJN}, but $g=(M,X)$, where $X=(\lambda,\mu)$.

We consider the
Siegel-Jacobi upper half space 
 $\mc{X}_n$ realised as in \eqref{SGUP}.

 We introduce  for $\mc{X}^J_n$ the analog of 
parametrization  used in \cite[p 7]{bern84}, \cite[p. 11]{bs}, \cite[\S~38]{cal3} for  $\mc{X}^J_1$
\begin{equation}\label{uvpq}u:=pv+q, ~v:=x+\ii y,~v=v^t,~
  y>0,~p,q\in M(1,n,\R).\end{equation}

It should be noted that there is an isomorphism
$G^J_n(\R)\ni(M,X,K)\rightarrow(M,X)\in  G^J_n(\R)_0$ through which the
action of   $G^J_n(\R)_0$ on  $\mc{X}^J_n$ can be defined as in
\cite[Proposition 2]{coord}.

    It
is easy to prove  that
\begin{lemma}\label{actM}
a)  If $\mc{X}_n\ni v=x+\ii y$, then the action of $G^J_n(\R)_0$ on
$\mc{X}^J_n${\rm :}  $(M,X)\times
  (v',u')\rightarrow (v_1,u_1)$, where $M\in\Sp$  has the expression 
  \eqref{Real}, is given by the formulas
  \begin{subequations}\label{exac}
    \begin{align}
      v_1 & =(av'+b)(cv'+d)^{-1}=(v'c^t+d^t)^{-1}(v'a^t+b^t),\label{exac1}\\
      u_1 & = (u'+\lambda v'+\mu)(cv'+d)^{-1}. \label{exac2}
    \end{align}
  \end{subequations}
 If the modified pre-Iwasawa  decomposition \eqref{c0a01} is used, $v_1$ in
 \eqref{exac1} has the equivalent expressions \eqref{SPRT},
 \eqref{conf2} via the identification \eqref{X1Y10}.

  b)  For $\lambda, \mu\in M(1,n,\R)$, let us consider $(p,q)$ such that 
  \begin{subequations}\label{pqlm}
    \begin{align}
  (p,q) & =(\lambda,\mu)M^{-1}=(\lambda d^t-\mu c^t, -\lambda b^t+ \mu a^t),\\
      (\lambda,\mu) & =(p,q)M=(pa+qc,pb+qd),
~~~ p,q,\lambda,\mu\in M(1,n,\R).
\end{align}
  \end{subequations} 
 Then  the action of $G^J_n(\R)_0$ on $\mc{X}^J_n${\rm :}   $(M,X)\times
  (x',y',p',q')\rightarrow (x_1,y_1,p_1,q_1)$ is given by
  \eqref{exac1}, while
\begin{equation}\label{p1q1}
(p_1,q_1)=(p,q)+(p',q')\left(\begin{array}{cc}a & b \\c &
                                                          d\end{array}\right)^{-1}=
                                                      (p+p'd^t-q'c^t,q-p'b^t+q'a^t).
                                                       \end{equation}
 c) The action of $G^J_n(\R)$ on $\tilde{\mc{X}}^J_n\approx
 \mc{X}^J_n\times \R${\rm :}  
\begin{subequations}\label{actc}
\begin{align}
 (M,(\lambda,\mu),\kappa)\times (v',u',\kappa') &\rightarrow (v_1,u_1,\kappa_1),\\
(M,(\lambda,\mu),\kappa)\times
  (x',y',p',q',\kappa')& \rightarrow (x_1,y_1,p_1,q_1,\kappa_1)
\end{align}
\end{subequations}
 is given by
  \eqref{exac}, \eqref{p1q1} and
  %%%% \begin{equation}\label{4p7}
  \[
    \kappa_1=\kappa+\kappa'+\lambda q'^t-\mu p'^t.
    \]
 %%%%%%% \end{equation}
 d)  The $1$-form
  \begin{equation}\label{1FORMR}
    \lambda^R=\dd \kappa -p \dd q^t+q \dd p^t
  \end{equation}
  is invariant to the action \eqref{actc} of $G^J_n(\R)$ on
  $\tilde{\mc{X}}^J_n$.

  e) The action of $G^J_n(\R)$ on $G^J_n(\R)$ $$(M,(\lambda,\mu),\kappa)\times
  (S_n)'\rightarrow
  (S_n)_1,$$ is
  given in \eqref{X1Y1Z1} for $X',Y'$, while the other actions are
  given in {\emph{ a)-d)}}
  of the present Lemma.
\end{lemma}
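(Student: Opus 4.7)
The plan is to verify the five parts in turn, each being a short application of the structural results already in place.

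For (a), the formula \eqref{exac1} is the linear fractional action \eqref{conf1} of \Sp~on the Siegel upper half space, inherited through the Sp-block of the embedding \eqref{ggmin} of $G^J_n(\R)$ into $\text{Sp}(n+1,\R)$. The formula \eqref{exac2} for $u_1$ is then read off from the action of the full block matrix \eqref{ggmin} on a ``Jacobi-shaped'' symmetric element of $\mc{X}_{n+1}$ whose off-diagonal row/column is $u'$: the Heisenberg sector translates $u'$ by $\lambda v'+\mu$, while the Sp-sector contributes the right factor $(cv'+d)^{-1}$. The equivalence of \eqref{exac1} with \eqref{SPRT}--\eqref{conf2} under the modified pre-Iwasawa parametrisation is precisely Lemma~\ref{L5}.

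Parts (b) and (c) are bookkeeping with the composition law \eqref{GJN}. The relation \eqref{pqlm} just unpacks \eqref{MINV}, and \eqref{p1q1} is obtained by substituting \eqref{pqlm} into the Heisenberg sector of \eqref{GJN}: the right multiplication $(\lambda',\mu')\mapsto(\lambda',\mu')M$ becomes, in the $(p,q)$-variables, the additive law displayed in \eqref{p1q1}. For (c), the 2-cocycle $\tilde\lambda\mu'^t-\tilde\mu\lambda'^t$ appearing in \eqref{GJN} is rewritten through \eqref{pqlm} and collapses to $\lambda q'^t-\mu p'^t$ after invoking the symplectic identities \eqref{33a}.

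The essential computation is (d). Fixing the acting element and differentiating the formulas of (b) and (c) yields $dp_1 = dp'\,d^t - dq'\,c^t$, $dq_1 = -dp'\,b^t + dq'\,a^t$ and $d\kappa_1 = d\kappa' + \lambda\,dq'^t - \mu\,dp'^t$. Expanding $\lambda^R_1 := d\kappa_1 - p_1\,dq_1^t + q_1\,dp_1^t$ and collecting coefficients of $dp'^t$ and $dq'^t$, the symplectic relations $a^td-c^tb=\un$, $d^tb=b^td$ and $a^tc=c^ta$ from \eqref{simplecticR} reduce the coefficient of $dp'^t$ to $\mu+q'$ and that of $dq'^t$ to $-(\lambda+p')$. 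The contributions involving the fixed parameters $(\lambda,\mu)$ cancel exactly against the corresponding terms in $d\kappa_1$, leaving $\lambda^R_1 = d\kappa'+q'\,dp'^t-p'\,dq'^t = \lambda^R$. Part (e) is then immediate: the action on the $(X,Y)$-block of the $S_n$-coordinates is already supplied by \eqref{X1Y1Z1} in Lemma~\ref{L5}, while all remaining $S_n$-coordinates transform by (a)--(d). The main obstacle in the whole argument is (d): one must carefully distinguish the fixed group parameters $(\lambda,\mu,\kappa)$, which are constants for the differentiation, from the coordinates $(p',q',\kappa')$ on $\tilde{\mc{X}}^J_n$, which carry the differentials, and be attentive to left/right factors when transposing matrix products like $dp'\,d^t$.
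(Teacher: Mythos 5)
Your proposal is correct. Note that the paper itself gives no proof of this lemma --- it is introduced with ``It is easy to prove that'' and a pointer to Proposition~2 of the earlier coordinatization paper --- so there is nothing to compare against line by line; your argument supplies what the author left implicit. The substantive part is (d), and your computation checks out: with $\dd p_1=\dd p'\,d^t-\dd q'\,c^t$, $\dd q_1=-\dd p'\,b^t+\dd q'\,a^t$, the coefficient of $\dd p'^t$ in $-p_1\dd q_1^t+q_1\dd p_1^t$ is $(pb+qd)+p'(d^tb-b^td)+q'(a^td-c^tb)=\mu+q'$ and that of $\dd q'^t$ is $-(\lambda+p')$, so the $\mu$ and $\lambda$ terms cancel against $\dd\kappa_1$ and $\lambda^R$ is reproduced at the primed point. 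Your identification of the cocycle $\tilde\lambda\mu'^t-\tilde\mu\lambda'^t$ with $\lambda q'^t-\mu p'^t$ in (c) likewise amounts to $M'J(M')^t=J$, i.e.\ the relations \eqref{33a}, exactly as you say.
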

%%%%%We called the parameters 
 %%%%%% $(x,y,X,Y,p,q,\kappa)$  $S_n$-coordinates for $G^J_n(\R)$, $n\in\N$
%%%%%% because $S_1$  coincides with  the traditional S-coordinates $(x,y,\theta,p,q,\kappa)$ on $G%%%5555^J_1(\R)$.
  
  \subsection{Fundamental vector fields on $\mc{X}^J_n$ and $\tilde{\mc{X}}^J_n$}\label{FVF}

  We calculate FVF 
  associated to  the 
  generators of the Jacobi group  on  homogenous manifolds
  attached  to $G^J_n(\R)$.

            For a symmetric matrix $x\in M(n)$ we introduce the
            notation
            \begin{equation}\label{px}
              \pa_x:=\left((2-\delta_{ij})\frac{\pa }{\pa
                  x_{ij}}\right)_{i,j=1,\dots,n}.
            \end{equation}
            If $a=(a_1,\dots,a_n)$, $b=(b_1,\dots,b_n)$ are two
            $n$-vectors,  we introduce also the
            notation
            \begin{equation}\label{py}
            (a\odot
            b)_{ij}:=a_ib_j+a_jb_i-a_ib_j\delta_{ij},\quad ij=1,\dots,n.
          \end{equation}

          Note the isomorphism of the representations  \eqref{HFV} 
          and  \eqref{alg2}  of $\got{sp}(n,\R)$.  To a matrix $A$ as
          in \eqref{alg2}, let us denote by $\hat{A}$
          the corresponding matrix in the representation \eqref{HFV}.

          We make the following 
\begin{Remark}\label{REM10}
                Let $z\in M(n)$. Then we have
                the relation  \eqref{EQN}
                \begin{equation}\label{EQN}
         \frac{\pa}{\pa
          z}\dd z=\un, \text{ ~i.e.~} \frac{\pa z_{ij}}{\pa
          z_{pq}}=\delta_{ip}\delta_{jq} .
      \end{equation}
      If the matrix  $z$ is symmetric,  instead of \eqref{EQN} we have \eqref{R1}
      \begin{equation}\label{R1}
           D_z\dd z= \un,~z=z^t, \text{ ~i.e.~}
            (D_z)_{\mu\nu}\dd
            z_{\nu\chi}=\delta_{\mu\chi},~z_{\mu\nu}=z_{\nu\mu},
          \end{equation}
          where    
            \begin{equation}\label{DZ}
              (D_z)_{\mu\nu}:=e_{\mu\nu}\frac{\pa}{\pa
            z_{\mu\nu}},~e_{\mu\nu}:=\frac{1+\delta_{\mu\nu}}{2},
          \text{~~no summation!}
          \end{equation}
              \end{Remark}
              \begin{proof}
                \eqref{EQN} is evident.
                
Using equation \cite[(4.5)]{coord} which says that for a symmetric matrix $w$ we have
\begin{equation}\label{pawpaw}\frac{\pa
                w_{ij}}{\pa w_{pq}}=\delta_{ip}\delta_{jq}+\delta_{iq}\delta_{jp}-\delta_{ij}\delta_{pq}\delta_{ip},
              ~w_{ij}=w_{ji},\end{equation}
\eqref{R1} it is verified, where the symbol  $D$ in \eqref{DZ}
was introdced in \cite[(3.39)]{coord}.
  \end{proof}

We obtain the following representations of the FVF
associated to the base \eqref{alg2}, \eqref{algp} of  the Lie algebra
 $\got{g}^J_n(\R)$ 
          \begin{Proposition}\label{FDVF}
            a) The fundamental vector fields in the coordinates
   $(v,u)$ of $\mc{X}^J_n$ on which $G^J_n(\R)$ acts as in {\emph{Lemma
   \ref{actM} a)}} are given by the holomorphic FVF
   \begin{subequations}\label{EQQ1}
     \begin{align}
   F^*_{ij} &=\hat{F}_{ij}\frac{\pa}{\pa v},\quad i,j=1,\dots,n;\label{411a}\\
   {G}^*_{ij} & =-v\hat{G}_{ij}v\frac{\pa}{\pa
     v}-(\frac{\pa}{\pa
     u})^tu\hat{G}_{ij}v;\label{411b}\\H^*_{ij} &=(\hat{E}_{ij}v+v\hat{E}_{ji})\frac{\pa}{\pa
    v}+(\frac{\pa}{\pa u})^t u\hat{E}_{ji};\label{411c}\\
   P^*_p& =\hat{E}_pv(\frac{\pa }{\pa
          u})^t;~Q^*_q=\hat{E}_q(\frac{\pa}{\pa u})^t;~R^*=0,\quad p,q=1,\dots,n.\label{411d}
  \end{align}\end{subequations}
    b) The real holomorphic FVF associated  to \eqref{EQQ1}  in the
    variables $(x,y,\xi,\rho)$ on $\mc{X}^J_n$, where $v:=x+\ii y$,
    $y>0$, $u:=\xi+\ii \rho$ as in \eqref{uvpq}, are
    \begin{subequations}\label{EQQ2}
      \begin{align}
        F^*_{ij}  & =(F^*_1)_{ij}, \label{412a}\\
      G^*_{ij} & = (G^*_1)_{ij} +(\frac{\pa}{\pa
                  \xi})^t (\rho
                  \hat{G}_{ij}y-\xi\hat{G}_{ij}x)-(\frac{\pa}{\pa
                 \rho})^t (\xi\hat{G}_{ij}y+\rho\hat{G}_{ij}x) ,\label{412b}\\
        H^*_{ij}   & =  (H^*_1)_{ij}+(\frac{\pa}
                 {\pa  \xi})^t\xi\hat{E}_{ji}+(\frac{\pa}{\pa
                    \rho})^t\rho\hat{E}_{ij},\label{412c}\\
      P^*_p& =\hat{E}_px(\frac{\pa}{\pa \xi})^t+\hat{E}_py(\frac{\pa}{\pa
               \rho})^t;~Q^*_q=\hat{E}_q(\frac{\pa}{\pa
             \xi})^t,~R^*=0, \label{PQRpqr}
             \end{align}
           \end{subequations}
           where 
\begin{subequations}\label{FDFGH}
      \begin{align}
           (F^*_1)_{ij}& =\hat{F}_{ij}\frac{\pa }{\pa x},~\\
        (G^*_1)_{ij}  & =\alpha \frac{\pa}{\pa
                  x}-\beta\frac{\pa }{\pa
                  y},\\ 
       (H^*_1)_{ij}& =  (\hat{E}_{ij}x+x\hat{E}_{ji})\frac{\pa}{\pa
        x}+(\hat{E}_{ij}y+y\hat{E}_{ji})\frac{\pa}{\pa
                     y},\\
    \alpha &:=    (y\hat{G}_{ij}y-x\hat{G}_{ij}x), ~\beta:=(x\hat{G}_{ij}y+y\hat{G}_{ij}x),
                      \end{align}
                    \end{subequations}
are FVF associated with
the generators of $\got{sp}(n,\R)$ corresponding to
the action \eqref{exac1} of \Sp~ on $\mc{X}_n$.
                    
 c) The FVF  \eqref{EQQ2}  in  the
 variables $(x,y,p,q)$  on $\mc{X}^J_n$, where
 \begin{subequations}\label{schpq}
 \begin{align}v&=x+\ii y,~u=pv+q=\xi+\ii \rho,\\
               & p=\rho y^{-1}, q=\xi-\rho y^{-1}x,
 \end{align}
  \end{subequations} are
\begin{subequations}\label{EQQ3}
  \begin{align}
   ( F^*)_{ij}\!= &\hat{F}_{ij}(\frac{\pa}{\pa _x}-\frac{\pa}{\pa
                 q}\odot p), \label{415a} \\
    (G^*)_{ij} = &  (G^*_1)_{ij} -((\frac{\pa}{\pa
                  p})^tp)y^{-1}\beta +\beta(\frac{\pa}{\pa
                   p}y^{-1})\odot p 
  -\alpha\frac{\pa}{\pa q}\odot p 
               +((\frac{\pa}{\pa
    q})^tp)\alpha\nonumber\label{415b}\\
                     & \!-\!\beta(\frac{\pa}{\pa
    q}xy^{-1})\odot p   \!+\! ((\frac{\pa}{\pa
                       q})^tp)(y^{-1}x\beta) \\
                   & - ((\frac{\pa}{\pa
                     p})^tq) y^{-1}\hat{G}_{ij}y +((\frac{\pa}{\pa q})^tq)y^{-1}\alpha, \nonumber\\
   ( H^*)_{ij}= & (H^*_1)_{ij} +(\hat{E}_{ij}y+y\hat{E}_{ji})[-(\frac{\pa}{\pa
                    p}y^{-1})\odot p + (\frac{\pa}{\pa q}xy^{-1})\odot
                    p] \nonumber\\ & -(\hat{E}_{ij}x+x\hat{E}_{ji})\frac{\pa }{\pa
                    q}\odot p
     +((\frac{\pa}{\pa q})^tp)(x\hat{E}_{ji}-y^{-1}xy\hat{E}_{ji}) \label{415c}\\ &+((\frac{\pa}{\pa
    q})^tq)\hat{E}_{ji}+((\frac{\pa}{\pa p})^tp)\hat{E}_{ij}, \nonumber\\
    P^*_p \!=  & E_p(\frac{\pa}{\pa p})^t,~ Q^*_q= E_q(\frac{\pa}{\pa q})^t,~R^*=0.
\end{align}
\end{subequations}

d) Now we consider the action of  $G^J_n(\R)$ on $(u',v',\kappa')\in\tilde{X}^J_n$ as in {\emph{Lemma
\ref{actM} c)}}. We find for FVF
$F^*_{ij},G^*_{ij},H^*_{ij}$ the expressions \eqref{411a},
\eqref{411b}, respectively \eqref{411c}, while instead of \eqref{411d}, we find
%%%%% 555\begin{equation}\label{EQQ4}
\[
   P^*_p =\hat{E}_pv(\frac{\pa }{\pa u})^t+q\pa_{
   \kappa};~Q^*_q=\hat{E}_q(\frac{\pa}{\pa u})^t-p\pa_{\kappa}
 ;~R^*=\pa_{\kappa}.
 \]
 %%%%%%%%%%\end{equation}

 e)  The FVF on $\tilde{\mc{X}}^J_n$ in the variables
 $(x,y,\xi,\rho,\kappa)$ are given by \eqref{412a}, \eqref{412b},
 respectively \eqref{412c}
 for $F^*_{ij}, G^*_{ij}, H^*_{ij}$, while \eqref{PQRpqr} became
 %%%%%%%% \begin{equation}\label{EQQ5}
 \[
   P^*_p=E_p(x\frac{\pa}{\pa \xi}+y\frac{\pa}{\pa
   \rho})+q\pa_{\kappa}, Q^*_q=E_q\frac{\pa}{\pa
   \xi}-p\pa_{\kappa},R^*=\pa_{\kappa}, ~p=\rho y^{-1},~q=\xi-\rho
 y^{-1}x.
\]
%%%%%%%\end{equation}

 f) We express the FVF  $F^*_{ij},G^*_{ij},H^*_{ij}$  on $\tilde{\mc{X}}^J_n$ in
 the variables  $(x,y,p,$ $q,\kappa)$ as in
 \eqref{415a},  \eqref{415b}, respectively \eqref{415c}, and
  \begin{equation}\label{EQQ6}
 P^*_p=E_p(x\frac{\pa}{\pa q}-y\frac{\pa}{\pa q}xy^{-1}+y\frac{\pa}{\pa
  p}y^{-1})+q\pa_{\kappa},~ Q^*_q=E_q\frac{\pa}{\pa
  q}-p\pa_{\kappa},~R^*=\pa_{\kappa}.
\end{equation}

  \end{Proposition}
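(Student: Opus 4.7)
The plan is to compute each fundamental vector field in the standard way: for a generator $X$ in the base \eqref{alg2}, \eqref{algp}, form the one-parameter subgroup $\exp(tX)\subset G^J_n(\R)$, apply the corresponding action formula from Lemma \ref{actM} to a generic point, and differentiate at $t=0$. Since only the first-order Taylor expansion is needed, I expand $\exp(tX)=\oo+tX+O(t^2)$ and use $(\oo+tA)^{-1}=\oo-tA+O(t^2)$ whenever an inverse factor $(cv+d)^{-1}$ appears in \eqref{exac}--\eqref{p1q1}.

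For part (a), from \eqref{F1}--\eqref{R11} and \eqref{alg2} the relevant one-parameter subgroups read off directly: $\exp(tF_{ij})$ gives $(a,b,c,d)=(\un,t\hat F_{ij},\UUN,\un)$, $\exp(tG_{ij})$ gives $(\un,\UUN,t\hat G_{ij},\un)$, $\exp(tH_{ij})$ gives $(\un+tE_{ij},\UUN,\UUN,\un-tE_{ji})+O(t^2)$, $\exp(tP_p)$ gives Heisenberg parameter $(tE_p,0,0)$ and $\exp(tQ_q)$ gives $(0,tE_q,0)$. Substituting into \eqref{exac1} and keeping only the linear term yields $\delta v=\hat F_{ij}$, $-v\hat G_{ij}v$, $\hat E_{ij}v+v\hat E_{ji}$, $0$, $0$ for $F^*_{ij},G^*_{ij},H^*_{ij},P^*_p,Q^*_q$ respectively, while \eqref{exac2} gives $\delta u=0,\,-u\hat G_{ij}v,\,u\hat E_{ji},\,E_p v,\,E_q$. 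Reading these off as coefficients of $\pa/\pa v$ and $(\pa/\pa u)^t$ reproduces \eqref{411a}--\eqref{411d}; $R^*=0$ because $\kappa$ is absent from $\mc{X}^J_n$.

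For part (b), set $v=x+\ii y$, $u=\xi+\ii \rho$ and split each expression in \eqref{EQQ1} into real and imaginary parts. The only nontrivial pieces are $-v\hat G_{ij}v=-\alpha-\ii\beta$ with $\alpha,\beta$ as in \eqref{FDFGH}, and $-u\hat G_{ij}v,\,u\hat E_{ji}$ whose real/imaginary components yield the $\pa/\pa\xi$- and $\pa/\pa\rho$-terms of \eqref{412b}, \eqref{412c}. Part (c) is then a coordinate change using \eqref{schpq}: component-wise the chain rule gives $\pa/\pa\xi_i=\pa/\pa q_i$, $\pa/\pa\rho_i=(y^{-1})_{ij}\pa/\pa p_j-(y^{-1}x)_{ij}\pa/\pa q_j$, and $\pa/\pa x$, $\pa/\pa y$ at fixed $(\xi,\rho)$ acquire correction terms from differentiating $p=\rho y^{-1}$, $q=\xi-\rho y^{-1}x$ (the $y$-correction requires $\pa(y^{-1})/\pa y$, which is where the symmetric-matrix conventions \eqref{px}, \eqref{py}, \eqref{pawpaw} of Remark \ref{REM10} enter, producing the $\odot$ notation). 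Substituting into \eqref{EQQ2} and grouping by the differential operators in $(x,y,p,q)$ gives precisely \eqref{EQQ3}.

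Parts (d)--(f) are extensions by the single $\kappa$-direction. From Lemma \ref{actM}(c) only the Heisenberg generators shift $\kappa$: $\exp(tP_p)$ contributes $\delta\kappa=t\,q_p$, $\exp(tQ_q)$ contributes $-t\,p_q$, and $\exp(tR)$ contributes $t$, while $F_{ij},G_{ij},H_{ij}$ leave $\kappa$ invariant. Thus the expressions for $F^*_{ij},G^*_{ij},H^*_{ij}$ in (d)--(f) are identical to those in (a)--(c), while $P^*_p,Q^*_q$ gain the $q\,\pa_\kappa$ and $-p\,\pa_\kappa$ terms and $R^*=\pa_\kappa$; in (f) one additionally combines this with the change of variables already carried out in (c) to produce \eqref{EQQ6}. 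The main obstacle is part (c): the nonlinear mixing of $y$ with $p,q$ in the chain rule, combined with the symmetric-matrix derivative conventions, makes the bookkeeping delicate; once \eqref{pawpaw} and the operator $D_z$ of \eqref{DZ} are used systematically, each term of \eqref{415b}--\eqref{415c} is a direct transcription of \eqref{412b}--\eqref{412c}.
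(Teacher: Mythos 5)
Your proposal follows essentially the same route as the paper: part (a) is the definition of the fundamental vector fields applied to the one-parameter subgroups and the action of Lemma \ref{actM}, part (b) is the passage $Z\mapsto Z+\bar{Z}$ to real holomorphic fields by separating real and imaginary parts, and part (c) is the chain rule for \eqref{schpq} using the symmetric-matrix conventions \eqref{pawpaw}, \eqref{px}, \eqref{py}, exactly as in the paper's proof. One small slip: the real part of $-v\hat{G}_{ij}v$ is $y\hat{G}_{ij}y-x\hat{G}_{ij}x=\alpha$, not $-\alpha$, so the intermediate formula should read $-v\hat{G}_{ij}v=\alpha-\ii\beta$, consistent with $(G^*_1)_{ij}=\alpha\frac{\pa}{\pa x}-\beta\frac{\pa}{\pa y}$ in \eqref{FDFGH}.
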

  \begin{proof}

            a) We apply the definition of fundamental vector fields.
            For $P_p,Q_q,R$  on components, we find $$(P^*_p)_i  = (\hat{E}_pv)_i\frac{\pa }{\pa
                    u_i},~(Q^*_q)_i=(\hat{E}_q)_i\frac{\pa}{\pa
                    u_i},~~~R^*=0,$$
                  which we write as  in \eqref{411d}.
            
            b)   In order to determine the real holomorphic FVF
            associated to the holomorphic FVF \eqref{EQQ1},   let  $Z$ be  a
            holomorphic 
            vector field  on a complex  $n$-dimensional   manifold
            $$ Z:=\sum_{i=1}^nZ_j\frac{\pa}{\pa z_j}, ~Z_j:=A_j+\ii
            B_j,~ A_j,B_j\in C^{\infty}(M). $$
            Then the real holomorphic field $X=Z+\bar{Z}$ in
            coordinates $(x_j,y_j)$, $z_j=x_j+\ii y_j$ is, see
            \cite[Proposition 22 in 
            v1]{SB19} or
            \cite[Proposition 2.11]{kn}, 
            $$X=\sum_{i=1}^nA_j\frac{\pa}{\pa x_j}+B_j\frac{\pa}{\pa y_j}.$$
            
            c) In order to make the change of variables
            $(x,y,\xi,\rho)\rightarrow (x,y,p,q)$ as in \eqref{schpq},
           firstly it is observed that the Jacobian of the
            transformation  is non-zero:
            $\frac{\pa(x,y,\xi,\rho)}{\pa(x,y,p,q)}=-y<0.$

            With formula \eqref{pawpaw}, we get the following formulas
  \begin{subequations}%%%%%5\label{schimb0}
    \begin{align*}
      \frac{\pa}{\pa x_{ij}} & \rightarrow
                               (2-\delta_{ij})\frac{\pa}{\pa
                               x_{ij}}-(p\odot \frac{\pa}{\pa
                               q})_{ij};\\\frac{\pa}{\pa
                               y_{ij}} &\rightarrow
                               (2-\delta_{ij})\frac{\pa}{\pa
                               y_{ij}}-(\frac{\pa}{\pa p}y^{-1}\odot
                               p)_{ij}  +(\frac{\pa}{\pa q} xy^{-1}\odot
                               p)_{ij}; \\\frac{\pa}{\pa
                               \xi_i} & \rightarrow \frac{\pa}{\pa
                               q_i};\\\frac{\pa}{\pa
                               \rho_l} & \rightarrow (\frac{\pa}{\pa
                              p}y^{-1})_l-(\frac{\pa}{\pa q}xy^{-1})_l;
\end{align*}
\end{subequations}
which can be written down  in  the conventions  \eqref{px}, \eqref{py} as 
    \begin{subequations}%%%%%%\label{schimb}
              \begin{align*}
                \frac{\pa}{\pa x} &=\pa_x-\frac{\pa}{\pa q}\odot p;\\
                \frac{\pa }{\pa y}
                                       &=\pa_y+[(-\frac{\pa}{\pa
                                         p}+\frac{\pa }{\pa
                                         q}x)y^{-1}]\odot p; \\
                \frac{\pa }{\pa \xi} & = \frac{\pa}{\pa q};\\
                \frac{\pa}{\pa\rho} &= \frac{\pa}{\pa
                                      p}y^{-1}-\frac{\pa}{\pa q}xy^{-1}.
               \end{align*}
              \end{subequations}
            \end{proof}

\subsection{Invariant one-forms on the Jacobi group}\label{IOF}

        From \eqref{ggmin},  we obtain 
        \begin{equation}\label{gmdg}
          g^{-1}\dd g = \left(\begin{array}{cccc} A_{11} & A_{12} &
                                                                    A_{13}&
                                                                            A_{14}\\
                                A_{21} & A_{22}& A_{23} & A_{24}\\
                                A_{31} & A_{32} & A_{33} & A_{34}\\
                                A_{41}& A_{42} & A_{43} & A_{44}
                              \end{array}\right),
                          \end{equation}
  where
  \begin{equation}\label{A11A22}
    \begin{split}
        A_{11}& \!=\!d^t\dd a -b^t\dd c;A_{12}\!=\!0;A_{13}\!=\!d^t\dd b-b^t \dd
              d; A_{14}\!=\!d^t\dd q^t+b^t\dd p^t;\\
      A_{21} &\!=\!\dd\lambda\!-p\! \dd a \!-\! q\dd c;A_{22}\!=\! 0; A_{23}\!=\!\dd
               \mu\!-\!p\dd b\!-\!q\dd d;A_{24}\!=\! \dd \kappa -p\dd q^t\!+\!q\dd p^t;\\
      A_{31}& \!=\! -c^t\dd a +a^t \dd c; A_{32}\!=\! 0;A_{33}\!=\!-c^t\dd b+a^t
              \dd d;A_{34}=-c^t\dd q^t-a^t \dd p^t;\\
      A_{41}&\!=\!A_{42}=A_{43}=A_{44}=0.
       \end{split}
     \end{equation}
     With \eqref{pqlm} and \eqref{simplecticR}, we get from
     \eqref{A11A22} the relations
     \begin{equation}\label{REL}
       A_{24}=\dd \kappa-p \dd q^t+q\dd p^t;~
         A_{34}=-A_{21}^t;~A_{23}=A^t_{14}; ~A_{11}=-A_{33}^t.
       \end{equation}
       With \eqref{gmdg} and \eqref{REL}, we get
       \begin{lemma}\label{L9}
         For $g\in\got{g}^J_n(\R)$ as in \eqref{ggmin}, we have in the
         basis \eqref{alg2}, \eqref{algp} the expression
        %%%%%%% \begin{equation}\label{1DG}
         \[
           g^{-1}\!\dd\! g
         \!=\!\sum_{i,j=1}^n\!(\lambda^H)_{ij}H_{ij}\!+\!\sum_{1\le i\le j\le n
           }[(\lambda^F)_{ij}F_{ij}\!+\!
           (\lambda^G)_{ij}G_{ij}]\!+\!\sum_{i=1}^n[(\lambda^P)_iP_i\!+\!(\lambda^Q)_iQ_i]\!+\!\lambda^R
           R,
           \]
         %%%%%\end{equation}
         where the invariant one-forms corresponding to the generators
         \eqref{alg2} are
         \begin{subequations}\label{CEDE}
           \begin{align}
             \lambda^F &=d^t\dd b-b^t \dd
              d =(\lambda^F)^t,\\
             \lambda^G& = -c^t\dd a +a^t \dd c=(\lambda^G)^t, \\
             \lambda^H&=d^t\dd a -b^t\dd c=\dd b^t c-\dd d^t a =(\lambda^H)^t,\\
             \lambda^P & =\dd \lambda -p 
                         \dd a- q\dd c = \dd p a+\dd
                         qc=\lambda^p-\lambda \lambda^H-\mu\lambda^G,\\
             \lambda^Q&= \dd q d+\dd p b=\dd \mu-p \dd
                        b- q\dd d=\lambda^q-\lambda\lambda^F+\mu\lambda^H,\\
             \lambda^R&= \dd \kappa -p\dd q^t+q\dd p^t=\lambda^r+\lambda\lambda^F\lambda^t-\mu\lambda^G\mu^t-2\lambda\lambda^H\mu^t,
             \end{align}
           \end{subequations}
           and $\lambda^p,\lambda^q,\lambda^r$ are given by
           \eqref{lplqlr}.

  Let us introduce the notation
  %%%%% \begin{equation}\label{LRC}
  \[
                L:=y^{-1}\dd y,\quad R:=\dd y y^{-1},\quad C:=y^{-1}\dd
                x y^{-1}.
              \]
            %%%%%%%\end{equation}
         
         With {\emph{Lemma  \ref{R313}}}, we rewrite the invariant one-forms
         \eqref{CEDE} 
          for $G^J_n(\R)$ as
         \begin{subequations}\label{LFGHL}
           \begin{align}
            \label{FF1} \lambda^F & = X^t\dd Y-Y^t\dd X+X^tL Y+
                         X^tCX+Y^tRX,\\
           \label{GG2}  \lambda^G & =-X^t\dd Y+ Y^t\dd X+ Y^tL X
                         -Y^tCY + X^t  RY,\\
          \label{HH2}   \lambda^H& = X^t\dd X+Y^t\dd Y +X^tLX-X^tC Y- Y^tRY,\\
             \label{PP2}           \lambda^P & = \dd p (yX-xy^{-1}Y)- \dd
             qy^{-1}Y,\\
         \label{QQ2}    \lambda^Q & =\dd q y^{-1}X +\dd p(yY+xy^{-1}X),\\
           \label{RR2}  \lambda^R & = \dd \kappa -\dd q p^t+\dd p q^t.
           \end{align}
         \end{subequations}
         We have also
         \begin{equation}
           \begin{split}\label{313}
             \lambda^F+\lambda^G & \!=\!X^t(L+R)Y\!+\!Y^t(L+R)X\!
    +\!X^tCX\!-\!Y^tCY,\\
             \lambda^F\!-\!\lambda^G & \!=\!2(X^t\dd Y\!-\!Y^t\dd X)\!+\!2X^t(L-R)Y+\!X^tCX\!+\!Y^tCY.
             \end{split}
           \end{equation}
     \end{lemma}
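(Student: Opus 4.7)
The strategy is to prove \eqref{CEDE} and \eqref{LFGHL} in two successive stages. First I read off the coefficient of each basis generator from the already-computed matrix of one-forms $g^{-1}\dd g$ in \eqref{gmdg}--\eqref{A11A22}, producing \eqref{CEDE}. Then I substitute the modified pre-Iwasawa parameters from Lemma \ref{R313} into those expressions to arrive at \eqref{LFGHL}.

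For the first stage, $g^{-1}\dd g\in\got{g}^J_n(\R)\subset\got{sp}(n+1,\R)$, so it decomposes uniquely in the basis \eqref{alg2}, \eqref{algp}. Each generator occupies a distinct set of entries in the $(2n+2)\times(2n+2)$ matrix: $F_{ij}$ in the upper-right $n\times n$ block ($A_{13}$), $G_{ij}$ in the lower-left $n\times n$ block ($A_{31}$), $H_{ij}$ in the upper-left $n\times n$ block ($A_{11}$), $P_p$ along the $(n+1)$-st row of the first block-column ($A_{21}$), $Q_q$ along the $(n+1)$-st row of the third block-column ($A_{23}$), and $R$ at the single entry $(n+1,2n+2)$ ($A_{24}$). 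Reading off the corresponding blocks of \eqref{A11A22} yields the first equality in each line of \eqref{CEDE}. The alternative form of $\lambda^H$ comes from differentiating the symplectic identity $d^ta-b^tc=\un$; the alternative forms of $\lambda^P,\lambda^Q$ come from substituting $(\lambda,\mu)=(p,q)M$ via \eqref{pqlm} and using the pieces of \eqref{A11A22} already identified; the symmetries $\lambda^F=(\lambda^F)^t$ and $\lambda^G=(\lambda^G)^t$ follow by differentiating $b^td=d^tb$ and $a^tc=c^ta$ from \eqref{simplecticR}.

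For the second stage, I substitute
\[
a=y^{1/2}X-xy^{-1/2}Y,\quad b=y^{1/2}Y+xy^{-1/2}X,\quad c=-y^{-1/2}Y,\quad d=y^{-1/2}X
\]
from \eqref{c0a01A} into each of the six formulas of \eqref{CEDE}, compute $\dd a,\dd b,\dd c,\dd d$, and regroup using $X^tX+Y^tY=\un$, $X^tY=Y^tX$ from \eqref{msimor} together with $x^t=x$, $y^t=y$. The compact combinations $L=y^{-1}\dd y$, $R=\dd y\cdot y^{-1}$, $C=y^{-1}\dd x\cdot y^{-1}$ emerge once the differentials of $y^{\pm 1/2}$ are arranged symmetrically between $X$ and $Y$ factors. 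The identities \eqref{313} then follow by adding and subtracting the expressions for $\lambda^F$ and $\lambda^G$.

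The main obstacle will be the matrix square root $y^{1/2}$: since $y^{1/2}$ does not commute with $\dd y$ in general, one cannot write $\dd y^{1/2}=\tfrac{1}{2}y^{-1/2}\dd y$, only the implicit relation $y^{1/2}\dd y^{1/2}+\dd y^{1/2}\cdot y^{1/2}=\dd y$. I plan to arrange every occurrence of $\dd y^{1/2}$ so that it appears in exactly this combination, which lets pairs of $y^{\pm 1/2}$ collapse into $L,R,C$ and leaves no explicit square root in \eqref{LFGHL}. The systematic device for carrying this out is the one summarized in Section \ref{AP8}; the bookkeeping required to check this cancellation across all six one-forms is what makes the calculation long.
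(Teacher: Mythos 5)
Your first stage is exactly the paper's argument: the coefficients in \eqref{CEDE} are read off block by block from the matrix $g^{-1}\dd g$ computed in \eqref{gmdg}--\eqref{A11A22}, the consistency of the off-diagonal blocks being guaranteed by \eqref{REL}, and the alternative forms follow from differentiating \eqref{simplecticR} and \eqref{pqlm} as you describe. That part is correct.

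The second stage contains a genuine error: you substitute the \emph{modified} pre-Iwasawa parametrization \eqref{c0a01A}, with the matrix square roots $y^{\pm 1/2}$, whereas the formulas \eqref{LFGHL} are obtained from the \emph{unmodified} pre-Iwasawa decomposition \eqref{c0a0} of Lemma \ref{R313} --- which is what the statement of the lemma actually invokes. The discrepancy is not cosmetic. First, even where no differential of $y^{1/2}$ occurs, your substitution gives a different answer: $\lambda^P=\dd p\,a+\dd q\,c$ would become $\dd p\,(y^{1/2}X-xy^{-1/2}Y)-\dd q\,y^{-1/2}Y$, not \eqref{PP2}. Second, your plan to arrange every occurrence of $\dd (y^{1/2})$ into the combination $y^{1/2}\dd (y^{1/2})+\dd (y^{1/2})\,y^{1/2}=\dd y$ cannot succeed for $\lambda^F,\lambda^G,\lambda^H$: the terms that actually arise have the shape $X^t y^{-1/2}\dd (y^{1/2})X$ and $X^t\dd(y^{1/2})\,y^{-1/2}X$, whose sum collapses only to $X^t y^{-1/2}\dd y\, y^{-1/2}X$, which is neither $X^tLX$ nor $X^tRX$ when $y$ and $\dd y$ do not commute. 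This is precisely the obstruction the paper records in Section \ref{difi} and Appendix \ref{AP8}, and it is the reason the lemma is phrased ``with Lemma \ref{R313}'' rather than ``with Lemma \ref{L5}''. Using \eqref{c0a0}, only integer powers of $y$ occur, $\dd(y^{-1})=-y^{-1}\dd y\,y^{-1}$ is elementary, and a direct computation (e.g.\ $d^t\dd b-b^t\dd d$ with $d=y^{-1}X$, $b=yY+xy^{-1}X$) yields \eqref{FF1}--\eqref{RR2} exactly, after which \eqref{313} follows by adding and subtracting as you say.
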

         Equations \eqref{CEDE} generalize to $G^J_n(\R)$, $n\in\N $,
         the corresponding equations
         (4.4) and (5.19) in \cite{SB19} for $G^J_1(\R)$. The last
         expression of $\lambda^R$ was obtained previously in
         \eqref{1FORMR} just in analogy to  \cite[(5.5f)]{SB19} for
         the Jacobi group $G^J_1(\R)$ and the
         invariance of the 1-form was verified.
 
           We see in \eqref{313} that  for any $n\in \{\N\}\setminus \{1\}$, 
           $\lambda^F+\lambda^G$ does not depend on $\dd X,\dd Y$,
           but  $\lambda^H$ does, while in the case $n=1$ both $\lambda^F+\lambda^G$
           and $\lambda^H$ they does not depend on $\dd\theta$.

           Indeed, 
          in the case of $G^J_1(\R)$, $X=\cos\theta$, 
         $Y=\sin\theta$, $y\rightarrow y^{\frac{1}{2}}$,  we get
         equations (4.11) in \cite{SB19}
         \begin{subequations}
           \begin{align}\label{capa}
             \lambda^F & =\frac{\dd x}{y}\cos^2\theta +\frac{\dd
                         y}{2y}\sin 2\theta +\dd \theta,\\
             \lambda^G& =-\frac{\dd x}{y}\sin^2\theta+ \frac{\dd
                        y}{2y}\sin 2\theta-\dd\theta,\\
             \lambda^H & = -\frac{\dd x}{2y}\sin 2\theta +\frac{\dd
                         y}{2y}\cos 2\theta ,\\
             \lambda^F+\lambda^G & = \frac{\dd x}{y}\cos 2\theta
                                   +\frac{\dd y}{y}\sin 2\theta,\label{capapl}\\
             \lambda^F-\lambda^G & = \frac{\dd x}{y}+2\dd \theta,\\
             2\lambda^H &= -\frac{\dd x}{y}\sin 2\theta +\frac{\dd
             y}{y}\cos 2\theta. 
              \end{align}
            \end{subequations}
          
            With the first equation \eqref{313}, we get
            \begin{equation}
              \begin{split}\label{LFLG2}
                (\lambda^F\!+\!\lambda^G)^2 &
               \! =\!\tr[2(X^t\!LY X^t\!RY\!\!+\!\!X^tLYX^t\!RY\!\!+\!\!X^tLYY^t\!LX\!\!-\!\!X^tLYY^tCY)\\
  &{\mbox{~~~~}}+X^tLYY^tRX+X^tLYX^tCX\\
  &{\mbox{~~~~}}+2(X^tRYX^tLY+X^tRYX^tRY-X^tRYY^tCY)\\
  &{\mbox{~~~~}} + X^tRYY^tLX+X^tRYX^tCX \\
  &{\mbox{~~~~}} +2(Y^tLXX^tLY+Y^tLXX^tCX-Y^tLXY^tCY)+Y^tLXX^tLY\\
  &{\mbox{~~~~}}+2(Y^tRXX^tCX-Y^tRXY^tCY)\\
  &{\mbox{~~~~}}+Y^tRXX^tLY+X^tCXY^tRX+X^tCXY^tL X].
  \end{split}
\end{equation}
With \eqref{HH2}, we get
\begin{equation}\label{LH2}
  \begin{split}
    (\lambda^H)^2 &=  (\lambda^H_1)^2+(\lambda^H_2)^2,\\
    (\lambda^H_1)^2 & =\tr[ X^t(LX\!-\!CY)X^t\dd X\!+\!X^t\dd X X^t(\dd X\!+\!LX)\!+\!
    Y^t(\dd Y\!-\!RY)Y^t\dd Y\\
    &{\mbox{~~~~}}-Y^t\dd Y(Y^tR+X^tC)Y+X^t\dd X Y^t(\dd Y-RY)-X^t\dd
    X X^tCY\\
   & {\mbox{~~~~}}+X^t(LX-CY)Y^t\dd Y +Y^t\dd Y X^t(\dd
   X+LX)-Y^tRYX^t\dd X] ;\\
   (\lambda^H_2)^2 &= \tr\{X^t(LX-CY)X^tLX+Y^tRY(Y^tR+X^tC)Y\\
   & {\mbox{~~~~+}} X^t[(CY-LX)Y^tR-RXY^tL+(CY-LX)X^tC]Y\}.
  \end{split}
\end{equation}
With the second equation \eqref{313}, we get 
\begin{equation}\label{FMING}
   \begin{split}
  (\lambda^F-\lambda^G)^2 &= \tr\{4[(X^t\dd Y-Y^t\dd X)^2+(X^t\dd
  Y-Y^t\dd X)(X^tCX+Y^tCY)]\\
  & {\mbox{~~~~}}+2XCX^tYCY^t+(X^tCX)^2+ (Y^tCY)^2+[X^t(L-R)Y]^2\\
   & {\mbox{~~~~}}2(X^tCX\!\!+\!\!Y^tCY)[X^t(L\!\!-\!\!R)Y\!\!+\!\!Y^t(R\!\!-\!\!L)X]\!\!\\
   &{\mbox{~~~~}}+ 4X^t(L\!\!-\!\!R)(X^t\dd Y\!\!-\!\!Y^t\dd X)\}.
  \end{split}
\end{equation}
In the case of the Jacobi group $G^J_1(\R)$, when
$X=\cos\theta,Y=\sin\theta$ and $y\rightarrow y^{1/2}$,
\eqref{LFLG2}, \eqref{FMING}, respectively \eqref{LH2} become what is obtained from
\eqref{capapl}, i.e.
%%%%% \begin{equation}
\[
  \begin{split}
(\lambda^F+\lambda^G)^2 &= \frac{(\cos 2\theta \dd x)^2+(\sin 2\theta
  \dd y)^2 +\sin4\theta \dd x\dd y}{y^2};\\
(\lambda^H_1)^2 &= 0;\\
(\lambda^H_2)^2& =\frac{(\sin 2\theta  \dd x)^2+(\cos
  2\theta \dd y)^2-\sin 4\theta\dd x\dd y}{4y^2};\\
(\lambda^F-\lambda^G)^2& = 4\dd \theta^2+4\frac{\dd x \dd
  \theta}{y}+\frac{\dd x^2}{y^2}.
\end{split}
\]
%%%%%%\end{equation}

            \subsection{Invariant vector fields on the Jacobi group}\label{difi}
            Once we have determined the invariant one-forms
            \eqref{CEDE}, we have to  determine the invariant vector fields
            orthogonal to them solving the equations
            \begin{equation}\label{lLinv}
              <\lambda^{\alpha}|(L^{\beta})^t>=\delta_{\alpha\beta},\alpha,\beta
            =
            F,G,H;~<(L^{\alpha})^t|\lambda^{\beta}>=\delta_{\alpha\beta}, \alpha,\beta=P,Q,R. \end{equation}
            We find
            \begin{subequations}\label{paLLL}
              \begin{align}
                (L^F)^t& =(\frac{\pa}{\pa_b})a+(\frac{\pa}{\pa_d})c,\\
                (L^G)^t& =(\frac{\pa}{\pa_a}+\frac{\pa}{\pa_b})b+(\frac{\pa}{\pa_c}+\frac{\pa}{\pa_d})d,\\
                (L^H)^t& =(\frac{\pa}{\pa_a})a+(\frac{\pa}{\pa_b})b+(\frac{\pa}{\pa_c})c+(\frac{\pa}{\pa_d})d,\\
                L^P &=(\frac{\pa}{\pa_p})d-(\frac{\pa}{\pa_q})b-(\frac{\pa}{\pa_{\kappa}})(pb+qd),\\
                L^Q &= -(\frac{\pa}{\pa_p})c+(\frac{\pa}{\pa_q})a+(pa+qc)\frac{\pa}{\pa_{\kappa}},\\
                L^R& =\frac{\pa}{\pa_{\kappa}}.
                \end{align}
              \end{subequations}
              
    In order to determine the invariant vector fields orthogonal
    to the invariant one-forms \eqref{LFGHL} as in \eqref{lLinv}, we
    have to calculate the derivative of $(a,b,c,d)$ expressed as in
    pre-Iwasawa decomposition \eqref{a0c0}, \eqref{Dx} or modified   pre-Iwasawa
    decomposition
    \eqref{c0a01}, but this is not an easy
    task.

    For exemple, let's take   the simpler case  $d =y^{-1}X$ in \eqref{a0c0}, then $\dd d=-y^{-1}\dd y
    y^{-1}+y^{-1}\dd X.$ More generally, let us   consider the one-forms 
    $$F_{pq}:=A_{pi}\dd y_{ij}B_{jq}+C_{pi}\dd X_{ij} D_{jq}, \quad
    A,B,C,D \in M(n,\R).$$
    We have to determine the invariant vector
    field $$f_{qr}:=M_{qm}D_{mn}(y)N_{nr}+P_{qm}D_{mn}(X)Q_{nr}$$
  such that $$<F_{pq}|f_{qr}>=\delta_{pr}.$$
    The matrices $M,N,P,Q$ such that satisfy the following 
    matrix equation
    \[
      \tr[(MB)(AN)+(CP^tQ)]=\un,
      \]
must be determined, 
    which is generally a difficult problem. If we consider the
     expression of $d$ in \eqref{c0a01A} the situation is even more
     complicated because of the difficulties to calculate the
     differential of the square root  of a matrix, see Appendix
     \ref{AP8}, and  we abandon the task  of explicitly determining 
    the invariant vector fields orthogonal  to the left
    invariant one-forms \eqref{CEDE}.
            
        \section{Invariant metrics on   homogeneous
          manifolds associated  to  $G^J_n(\R)$}
We follow the  notation in \cite[(4.15), (5.21)]{SB19}  for the invariant one-forms on
$G^J_1(\R)$.
\begin{Proposition}\label{PRm}Let  us introduce the invariant one-forms
on $G^J_n(\R)$
 \begin{equation}\label{BBB}
\begin{split}
            \lambda_1& :=\sqrt{\alpha}(\lambda^F+\lambda^G),~ \lambda_2
            :=\sqrt{\alpha}\lambda^H,~\lambda_3:=\sqrt{\beta}(\lambda^F-\lambda^G),\\
            \lambda_4&
            :=\sqrt{\gamma}\lambda^P,~\lambda_5:=\sqrt{\gamma}\lambda^Q,~\lambda_6:=\sqrt{\delta}\lambda^R,\quad \alpha,\beta,\gamma,\delta>0,
          \end{split}
          \end{equation}
       %%%%%%%  \end{equation}
         where we use the expressions \eqref{LFGHL} for
         $\lambda^F,\dots,\lambda^R$. The composition law \eqref{GJN}  in the variables
         $(x,y,X,Y)$ is given in {\emph{Lemma \ref{R313}}} or in {\emph{Lemma \ref{L5}}}, and  for
         $p,q,\kappa $ in {\emph{Lemma \ref{actM}}}.
         Let us consider the {\emph{4}}-parameter  left invariant metric
         on $G^J_n(\R)$, which coincides with metric {\emph{(5.32)}} on $G^J_1(\R)$ in \cite{SB19}
         \begin{equation}\label{MMARE}
          \dd s^2_{G^J_n(\R)}=
         \sum_{i=1}^6\lambda_i^2,
       \end{equation}
           where the square of the invariant one-forms $\lambda_1,\lambda_2,\lambda_3$ in
           \eqref{MMARE} are given in \eqref{LFLG2}, \eqref{LH2},
           respectively 
           \eqref{FMING},  and the squares of
           $\lambda_4,\lambda_5,\lambda_6$ are given taking the square
           of\eqref{PP2} \dots \eqref{QQ2}.
           
           Depending of the values of the parametres
           $\alpha,\beta,\gamma,\delta$, \eqref{MMARE} gives the  invariant metric on
the following manifolds{\rm :} 
\begin{enumerate}
\item{\text{~~~~~~~~}}if $\beta,\gamma,\delta =0$ - the  Siegel upper half-plane $\mc{X}_n${\rm{;}}
\item{\text{~~~~~~~~}}if $\gamma,\delta=0, \alpha\beta\not= 0$ - the group $\Sp${\rm{;}} 
\item{\text{~~~~~~~~}}if $\beta, \delta= 0$ - the Siegel-Jacobi half  space
$\mc{X}^J_n${\rm{;}} 
\item{\text{~~~~~~~~}}if $\beta=0$ - the extended
  Siegel-Jacobi extended  half space
$\tilde{\mc{X}}^J_n${\rm{;}}  
\item{\text{~~~~~~~~}}if $\alpha\beta\gamma\delta\not= 0$ - the Jacobi
  group  $G^J_n(\R)$.
  \end{enumerate}

  The invariant vector fields \eqref{paLLL}, orthonormal with respect
  the invariant one-forms \eqref{BBB} in the sense of \eqref{lLinv},  are orthonormal with respect
  to the metric \eqref{MMARE}.
\end{Proposition}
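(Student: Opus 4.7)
I would prove the proposition in three stages: (i) left-invariance and positive-definiteness of \eqref{MMARE}; (ii) the five descent claims identifying which parameter choices yield metrics on each listed homogeneous quotient; (iii) orthonormality of the vector fields \eqref{paLLL} with respect to \eqref{MMARE}. Parts (i) and (iii) follow structurally from Lemma \ref{L9} and the pairing \eqref{lLinv}, while (ii) is the substantive step.

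For (i), Lemma \ref{L9} identifies $\lambda^F,\lambda^G,\lambda^H,\lambda^P,\lambda^Q,\lambda^R$ as the components of the Maurer--Cartan form $g^{-1}\dd g$ in the basis \eqref{alg2}--\eqref{algp}, hence left-invariant on $G^J_n(\R)$; so are the rescaled combinations \eqref{BBB} and the quadratic form \eqref{MMARE}. At the identity, the entries of these matrix-valued one-forms constitute a coframe dual to \eqref{alg2}--\eqref{algp} after the invertible change $(\lambda^F,\lambda^G)\leftrightarrow(\lambda^F+\lambda^G,\lambda^F-\lambda^G)$, so when $\alpha,\beta,\gamma,\delta>0$ the expression \eqref{MMARE} is a positive linear combination of squares of a coframe, hence positive definite at $e$; left translation propagates this to all of $G^J_n(\R)$. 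For (iii), let $L^\alpha$ be the vector field dual to $\lambda^\alpha$ under \eqref{lLinv} and let $w_\alpha\in\{\alpha,\beta,\gamma,\delta\}$ denote the coefficient attached to $(\lambda^\alpha)^2$ in \eqref{MMARE}; then $L^\alpha/\sqrt{w_\alpha}$ is dual to the rescaled $\lambda_i$ of \eqref{BBB}, and the dual frame of a sum-of-squares metric is tautologically orthonormal for it.

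The substantive stage (ii) rests on the criterion that a left-invariant metric on $G$ descends to $G/H$ iff each of its contributing one-forms annihilates $\got{h}$. From \eqref{CEDE}, $\lambda^F,\lambda^G,\lambda^H$ carry no differentials of the Heisenberg coordinates $(p,q,\kappa)$, while $\lambda^P,\lambda^Q$ carry $\dd p,\dd q$ and $\lambda^R$ carries $\dd\kappa$; so the projection $G^J_n(\R)\to\Sp$ requires only $\gamma=\delta=0$ (case 2), and descending further through the central $\R$-factor generated by $R$ requires $\delta=0$. The delicate point is vanishing on the $\mr{U}(n)$-isotropy. On a vertical curve for the $\mr{U}(n)$-fibration (set $\dd x=\dd y=\dd p=\dd q=\dd\kappa=0$), \eqref{LFGHL} collapses to $\lambda^F+\lambda^G=0$, $\lambda^H=X^t\dd X+Y^t\dd Y$, and $\lambda^F-\lambda^G=2(X^t\dd Y-Y^t\dd X)$. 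Differentiating the unitarity constraints in \eqref{msimor} (equivalently $\mc{M}'^\dagger\mc{M}'=\un$ via Remark \ref{REM2}) shows that $X^t\dd X+Y^t\dd Y$ is skew-symmetric whereas $X^t\dd Y-Y^t\dd X$ is symmetric; combined with the symmetry $\lambda^H=(\lambda^H)^t$ from \eqref{CEDE} this forces $\lambda^H|_{\got{u}(n)}=0$, while $\lambda^F-\lambda^G$ generically survives on $\got{u}(n)$. Hence $\beta=0$ is the precise condition for descent past $\mr{U}(n)$, which together with the Heisenberg analysis yields cases (1), (3), (4); case (5) needs no descent at all.

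\textbf{Main obstacle.} The chief difficulty is the symmetry/skew-symmetry dichotomy on $\mr{U}(n)$ just sketched: within the $(\lambda^F,\lambda^G,\lambda^H)$-block one must disentangle the vertical direction of the projection $\Sp\to\mc{X}_n$ from the horizontal, and it is the symmetry constraint $\lambda^H=(\lambda^H)^t$ combined with the unitarity of $\mc{M}'$ that achieves this. Once this is established, the remaining descents are routine inspections of \eqref{CEDE}--\eqref{LFGHL}.
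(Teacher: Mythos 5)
First, a point of context: the paper gives no proof of Proposition \ref{PRm} at all --- it is asserted as the formal extension of \cite[Theorem~1]{SB19} from $n=1$, and the text immediately afterwards concedes that the expressions \eqref{LFLG2}--\eqref{FMING} are complicated and that the dual vector fields \eqref{paLLL} are not explicitly computed. So your attempt is more ambitious than anything in the source. Parts (i) and (iii) of your plan are fine (the Maurer--Cartan components are left-invariant by construction, and a coframe's dual frame is tautologically orthonormal for the associated sum of squares), modulo the standard caveat that descent to $G/H$ requires $\operatorname{Ad}(H)$-invariance of the degenerate form and not only that the surviving one-forms annihilate $\got{h}$, which you do not check.

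The genuine gap is in stage (ii), precisely at the step you identify as the ``main obstacle''. Your conclusion $\lambda^H|_{\got{u}(n)}=0$ rests on combining the (correct) skew-symmetry of $X^t\dd X+Y^t\dd Y$ with the symmetry $\lambda^H=(\lambda^H)^t$ quoted from \eqref{CEDE}. That symmetry holds trivially for $n=1$ but is false for $n\ge 2$: at the identity of $\Sp$ one has $\lambda^H=d^t\dd a-b^t\dd c=\dd a$, and on $\got{u}(n)=\mathrm{Lie}(\mathrm{Sp}(n,\R)\cap\mathrm{O}_{2n})$, where $\dd a=A$ is skew-symmetric and nonzero for $n\ge2$, $\lambda^H$ restricts to $A\ne 0$. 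The paper itself states this obstruction explicitly right after \eqref{313}: for $n\ge 2$, $\lambda^F+\lambda^G$ does not depend on $\dd X,\dd Y$ but $\lambda^H$ does. Hence your descent argument for cases (1) and (3) breaks down for $n\ge2$: the block $\alpha\bigl[(\lambda^F+\lambda^G)^2+(\lambda^H)^2\bigr]$ retains a fiber contribution $\tr(A^2)\le 0$ along $\got{u}(n)$, so it does not project to a Riemannian metric on $\mc{X}_n$ or $\mc{X}^J_n$ by restriction of the one-forms. This is exactly why the paper abandons the one-form route for the quotients and instead obtains the invariant metrics on $\mc{X}^J_n$ and $\tilde{\mc{X}}^J_n$ by an entirely different mechanism: the two-parameter balanced (coherent-state) metric of Proposition \ref{L11}, to which the square of the single surviving form $\lambda^R$ is added in Theorem \ref{PR2}. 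Your argument, as written, cannot recover claims (1), (3), (4) of the Proposition for $n\ge 2$ without repairing this step.
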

Proposition \ref{PRm} is an extension to $G^J_n(\R)$, $n\in \N$, of  \cite[Theorem 1]{SB19} for
$G^J_1(\R)$.   However, the expressions \eqref{LFLG2}, \eqref{LH2},
           \eqref{FMING} are  complicated and also the invariant
           vector fields \eqref{paLLL} are in fact not explicitly calculated due to
           the difficulties signaled in Section \ref{difi}. Even the metric
           on the Siegel upper-half space given at (1) in Proposition
           \ref{PRm} is difficult  to  recognize.

           We 
           give a simple expression of the invariant metric on $\mc{X}^J_n$  without the
           invariant one-forms, using the metric determined on the
           Sigel-Jacobi upper half space $\mc{X}^J_n$  \cite{multi,coord,SB15}.

            \subsection{Invariant metrics on $\mc{X}^J_n$ and $\tilde{\mc{X}}^J_n$}
  Below  $k$, $2k\in \N$ indexes  the holomorphic discrete series of $\Sp$
  and $\nu>0$ indexes 
  the representations of the Heisenberg group. We reformulate for 
 $G^J_n(\R)$, $ n\in\N$, 
i\cite[  Proposition 1]{SB20}  for $G^J_1(\R)$. The starting point is 
   \cite[Proposition 3]{coord}, see also \cite[Theorem 3.2]{SB15}.  
  \begin{Proposition}\label{L11}
a) The \Ka~two-form
 %%%%%%\begin{subequations}\label{NOUW1}
\begin{subequations}
  \begin{align*}
      -\ii \omega_{\mc{D}^J_n} (W,z)  & \!=\!
                         \frac{k}{2}\tr(B\wedge\bar{B})\!+\!\nu\tr(A^t\bar{M}\wedge\bar{A}), ~
 A (W,z) \!:=\!\dd z^t\!+\!\dd W\bar{\eta}, W\in\mc{D}_n,\\%%%\label{NOU11}\\
B(W)  & := M\dd W, ~M  \!:=\!(\un
            -W\bar{W})^{-1},~z\in
            M(1,n,\C), ~\eta\in M(n,1,\C),%%%% \label{NOU21}
\end{align*}
\end{subequations}
is $(G^J_n)_0$ invariant to the action ${\rm{Sp}}(n,\R)_{\C}\times\C^n:~~  (W,z^t)\rightarrow(W_1,z^t_1)$
\begin{equation}\label{54}
  (\left(\begin{array}{cc}\mc{P} &
                                   \mc{Q}\\\bar{\mc{Q}}&\bar{\mc{P}}\end{array}\right),\!\alpha)
\times (W,z^t)\!=\!((W\mc{Q}^{\dagger}\!+\!\mc{P}^{\dagger})^{-1}(\mc{Q}^t\!+\!W\mc{P}^t),(W\mc{Q}^{\dagger}\!+\!\mc{P}^{\dagger})^{-1}
(z^t\!+\!\alpha^t\!-\!W\alpha^{\dagger})),
\end{equation}
where \eqref{33a} are verified, i.e. 
\begin{equation}\label{MCPQ} %%%%%%\begin{align*}
\mc{P}\mc{P}^{\dagger}-\mc{Q}\mc{Q}^{\dagger}=\un,\quad
                               \mc{P}\mc{Q}^t=\mc{Q}\mc{P}^t,\quad
\mc{P}^{\dagger}\mc{P}-\mc{Q}^t\bar{\mc{Q}}  =\un,\quad \mc{P}^t\bar{\mc{Q}}=\mc{Q}^{\dagger}\mc{P}.
\end{equation} %%%%%%%%\end{align*}
We have the change of variables $(W,z)\rightarrow (W,\eta)$
\begin{equation}\label{FC}
FC: ~~  z^t=\eta-W\bar{\eta}; ~~FC^{-1}: ~~ \eta=M(z^t+Wz^{\dagger}), 
\end{equation}
and
%%%%% \begin{equation}\label{FCC}
\[
  A(W,z)\rightarrow A(W,\eta)= \dd \eta-W\dd \bar{\eta}.
  \]
%%%%%%\end{equation}
The complex two-form
%%%%%% \begin{equation}\label{OME}
\[
  \omega_{\mc{D}^J_n}(W,\eta):=FC^*(\omega_{\mc{D}^J_n}(W,z))
  \]
%%%%%\end{equation}
is not a \Ka~two-form.

The symplectic two-form $\omega_{\mc{D}^J_n}(W,\eta)$ is invariant to
the action $(g,\alpha)\times (W,\eta)\rightarrow (W_1,\eta_1)$ of $(G^J_n)_0$ on
$\mc{D}_n\times\C^n$
\[
\eta_1^t=\mc{P}(\eta+\alpha)^t+\mc{Q}(\eta+\alpha)^{\dagger},
\]
where  $W_1$ is defined in \eqref{54} and $(\mc{P},\mc{Q})$ verify \eqref{MCPQ}.

b) Using the partial Cayley transform
\begin{subequations}
\begin{align}
\Phi^{-1} &: v=\ii (\un-W)^{-1}(\un+W); ~ u^t=(\un-W)^{-1}z^t, ~
              W\in\mc{D}_n,~ v\in \mc{X}_n;\\
\Phi &: W=(v-\ii \un)^{-1}(v+\ii \un), ~  z^t=2\ii(v+\ii \un)^{-1}u^t,  ~z,u\in M(1,n,\C),
\end{align} 
\end{subequations}
we obtain\begin{equation}\label{vvd}
A(W,z)=2\ii (v+\ii\un)^{-1}G(v,u),\quad G(v,u)=\dd u^t-\dd
v(v-\bar{v})^{-1}(u-\bar{u})^t.
\end{equation}

The \Ka~ two-form on $\mc{X}^J_n$ depending on two parameters,
invariant to the action \eqref{exac}  of $G^J_n(\R)_0$, is
%%%%% 5 \begin{equation}\label{NOUW}
\[
      -\ii \omega_{\mc{X}^J_n} (v,u) =
                         \frac{k}{2}\tr(H\wedge\bar{H})+\frac{2\nu}{\ii}\tr(G^tD\wedge\bar{G}),\quad
                         D :=(\bar{v}-v)^{-1},~H:=D\dd v.
                         \]
   %%%%%%  \end{equation}
 
We have the change of variables $FC_1: (v,\eta)\rightarrow (v,u)  $,
where 
\begin{subequations}\label{XCX}
\begin{align}
\eta & =(\bar{v}-\ii \un)D(v-\ii \un)[(v-\ii
       \un)^{-1}u^t-(\bar{v}-\ii\un)^{-1}u^{\dagger}],\\
u^t&=\frac{1}{2\ii}[(v+\ii \un)\eta-(v-\ii \un)\bar{\eta}].
\end{align}
\end{subequations}

c) If we make the change of variables \eqref{uvpq}, then \eqref{vvd} becomes
%%%% \begin{equation}\label{XXV}
\[
  G^t(v,u)=\dd u-p\dd v,
  \]
%%%%\end{equation}
  and %%%%%%\begin{equation}\label{214}
  \[
    G^t(v,u)=G^t(x,y,p,q)=\dd p v+\dd q= \dd p(x+\ii y)+\dd q.
    \]
%%%%\end{equation}

d) With \eqref{XCX}, \eqref{uvpq} and 
\begin{equation}\label{215}
M(1,n,\C)\ni\eta:=\chi+\ii \psi, \chi,\psi \in M(1,n,\R),
\end{equation}
 we have the change of coordinates 
%%%%%% \begin{equation}\label{216}
\[
  (x,y,p,q)\rightarrow (x,y,\chi,\psi),\quad p^t=\psi, q^t=\chi,
  \]
%%%%%%%\end{equation}
and 
\begin{equation}\label{217}
G^t(v,\eta)=G^t(x,y,\chi,\psi)=\dd \psi^t x+ \dd \chi^t +\ii \dd
\psi^t y.
\end{equation} 
We obtain  
%%%%%% \begin{equation}\label{218}
\[
\eta =  (q+\ii p)^t;\quad
q^t=\frac{1}{2}(\eta+\bar{\eta}),\quad p^t=\frac{1}{2\ii}(\eta-\bar{\eta}).
\]
%%%%%%\end{equation}
Given the change if variables \eqref{uvpq} and 
%%%%%% \begin{equation}\label{219}
\[
  u:=\xi+\ii \rho,
  \]
%%%%%%%%\end{equation}
 we have the change of variables
%%%%%% \begin{equation}\label{220}
\[
  (x,y,\xi,\rho)\rightarrow (x,y,p,q),\quad \xi=px +q, \quad \rho= py,
  \]
%%%%%%\end{equation}
and \eqref{217} becomes 
%%%%%%% \begin{equation}\label{221}
\[
G^t(v,u)=G^t(x,y,\xi,\rho)=\dd \xi-\rho y^{-1}\dd x +\ii (\dd
\rho-\rho y^{-1}\dd y).
\]
%%%%%%%55\end{equation}
With \eqref{uvpq}, \eqref{215} and  \eqref{XCX}, we have the
change of coordinates
%%%%%%% \begin{equation}\label{222}
\[
(x,y,\xi,\rho)\rightarrow (x,y,\chi,\rho), \quad \xi=\psi^t x+ \chi^t,
\rho=\psi^t y.
\]
%%%%%%\end{equation}
\end{Proposition}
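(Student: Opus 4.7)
The plan is to reduce everything to the known Siegel-Jacobi ball result and then transport the result to $\mc{X}^J_n$ via the partial Cayley transform $\Phi$. The starting point is \cite[Proposition 3]{coord} (equivalently \cite[Theorem 3.2]{SB15}), which already supplies the two-parameter \Ka~two-form $\omega_{\mc{D}^J_n}(W,z)$ on $\mc{D}^J_n$ and its $(G^J_n)_0$-invariance. For part (a), I would first recall this result, then verify invariance under the action \eqref{54} by direct computation: from $A(W,z)=\dd z^t+\dd W\bar{\eta}$ and the transformation laws for $W,z,\eta$, the one-form $A$ picks up the overall factor $(W\mc{Q}^{\dagger}+\mc{P}^{\dagger})^{-1}$, while $M=(\un-W\bar W)^{-1}$ transforms by the usual Schur-complement identity using \eqref{MCPQ}, so that $\tr(A^t\bar M\wedge\bar A)$ and $\tr(B\wedge\bar B)$ with $B=M\dd W$ are separately invariant. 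The second claim of (a) is then obtained by differentiating the relation $z^t=\eta-W\bar\eta$ from \eqref{FC}, yielding $A(W,\eta)=\dd\eta-W\dd\bar\eta$; the mixed cross-term $\dd W\wedge\dd\bar\eta$ that appears in the pulled-back two-form breaks the purely $(1,1)$ character, so $\omega_{\mc{D}^J_n}(W,\eta)$ is no longer \Ka, but closedness and nondegeneracy persist under the pull-back $FC^*$, so it remains symplectic, and invariance is automatic since $FC$ intertwines the two actions.

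For part (b), the strategy is to pull everything through the partial Cayley transform. Starting from $W=(v-\ii\un)^{-1}(v+\ii\un)$, a direct matrix identity gives $\un-W\bar W=2\ii(v-\ii\un)^{-1}(\bar v-v)(\bar v+\ii\un)^{-1}$, whence $M=(\un-W\bar W)^{-1}$ is written explicitly in terms of $D=(\bar v-v)^{-1}$, and similarly $\dd W=2\ii(v-\ii\un)^{-1}\dd v(v+\ii\un)^{-1}$. Substituting these into $A$ and $B$, together with $\dd z^t=2\ii\dd[(v+\ii\un)^{-1}u^t]$, produces after elementary simplification the formula \eqref{vvd}, namely $A(W,z)=2\ii(v+\ii\un)^{-1}G(v,u)$ and an analogous expression for $B$ in terms of $H=D\dd v$. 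Substituting into the expression for $\omega_{\mc{D}^J_n}$ makes the prefactors $(v+\ii\un)^{-1}$ collapse against their conjugates into powers of $D$, yielding the claimed two-parameter \Ka~two-form on $\mc{X}^J_n$. Invariance under \eqref{exac} is automatic because $\Phi$ intertwines the two group actions (this is the content of Remark \ref{direct} combined with Lemma \ref{actM}). The change of variables $FC_1$ in \eqref{XCX} is then obtained by composing $FC$ with $\Phi$ and substituting $z^t=2\ii(v+\ii\un)^{-1}u^t$ into $\eta=M(z^t+Wz^{\dagger})$.

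Parts (c) and (d) are bookkeeping changes of variables. Substituting $u=pv+q$ from \eqref{uvpq} into $G(v,u)=\dd u^t-\dd v\,(v-\bar v)^{-1}(u-\bar u)^t$ and using $u-\bar u=p(v-\bar v)$ immediately gives $G^t(v,u)=\dd u-p\,\dd v=\dd p\,(x+\ii y)+\dd q$. The remaining identities are obtained by splitting $v,u,\eta$ into real and imaginary parts and reading off from $\eta=(q+\ii p)^t$ and $u=\xi+\ii\rho$, with $\xi=px+q$, $\rho=py$, how each coordinate system relates to the next.

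The main obstacle is the explicit matrix computation in part (b), above all the verification that $A(W,z)=2\ii(v+\ii\un)^{-1}G(v,u)$ under the partial Cayley transform: one has to carefully track the noncommutativity of $v,\bar v,\un$ while simplifying the factors $(v\pm\ii\un)^{-1}$ against $\un-W\bar W$, and then show that the prefactors collapse consistently inside the trace. A secondary subtlety is the type decomposition in part (a) showing that $FC^*$ genuinely destroys the \Ka~property while preserving the symplectic one; this needs explicit inspection of the bi-degree of each term after substituting $\dd z^t=\dd\eta-\dd W\bar\eta-W\dd\bar\eta$.
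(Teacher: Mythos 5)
Your proposal is correct and follows essentially the same route as the paper: the paper itself gives no independent proof of Proposition \ref{L11}, stating only that it reformulates \cite[Proposition 1]{SB20} with starting point \cite[Proposition 3]{coord} (see also \cite[Theorem 3.2]{SB15}), i.e.\ the known K\"ahler two-form and invariance on the Siegel--Jacobi ball transported through the $FC$-transform and the partial Cayley transform, followed by the routine coordinate changes \eqref{uvpq}, \eqref{215} — exactly your plan. The only caveat is to track carefully the ordering/sign conventions in the matrix identity for $\un-W\bar{W}$ under the Cayley map, but that is a computational detail, not a gap in the argument.
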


We recall that in Perelomov's approach to CS it is considered the triplet
$(G,\pi,\got{H})$, where $\pi$ is a unitary, irreducible representation
of the Lie
group $G$ on the separable complex  Hilbert space $\got{H}$  \cite{perG}. 

We can introduce the normalized (un-normalized) CS-vector
$\underline{e}_x$ (respectively, $e_z$) defined in $z\in M=G/H$
 \begin{equation}\label{2.1}
\underline{e}_x=\exp(\sum_{\phi\in\Delta^+}x_{\phi}{\mb{X}}^+_{\phi}-{\bar{x}}_{\phi}{\mb{X}}^-_{\phi})e_0,
\quad e_z=\exp(\sum_{\phi\in\Delta^+}z_{\phi}{\mb{X}}^+_{\phi})e_0,
\end{equation}
where $e_0$ is the extremal weight vector of the representation $\pi$,
$\Delta^+$ denotes the set  of positive roots
of the Lie algebra $\got{g}$ of $G$, and $X_{\phi}$, $\phi\in\Delta$
are the generators.
$X^+_{\phi}$   ($X^-_{\phi}$) 
 corresponds to
the positive (respectively, negative) generators. See details in \cite{SB03,SB14,perG}.

Let us denote by $FC$ the change of variables $x \rightarrow  z$ in
formula \eqref{2.1} such that
%%%%%%\begin{equation}\label{2.2}
\[
  \underline{e}_x =  (e_z,e_z)^{-\frac{1}{2}}e_z; \quad z = FC(x).
  \]
%%%%%%%%\end{equation}
\cite[Lemma 2]{SB14} verifies the assertion above for CS
defined on $\mc{D}^J_1$,
see also   \cite[Lemma 3]{SB05}, \cite[Lemma 6.11 and  Remark 6.12]{holl}. But the same assertions are true for CS
defined on $\mc{D}^J_n$,   see  \cite[Lemma 7 and Comment
8]{SB06} and 
\cite[Lemma 3.6 and Remark 3.7]{multi}.

Next remark generalizes
\cite[Remark 1]{SB20} established on $G^J_1(\R)$  to
$G^J_n(\R)$, $n\in\N$. 
\begin{Remark}\label{REM11} The $FC$-transform \eqref{FC} relates the  un-normalized CS-vector $e_{Wz}$ to
  the normalized one $\underline{e}_{W\eta}$
\[
\underline{e}_{W\eta}=(e_{Wz}, e_{Wz})^{-\frac{1}{2}}e_{Wz},\quad
W\in\mc{D}_n,~~~z,~\eta^t\in M(1,n,\C),
\]
and the $S_n$-variables $p,q$ are related to parameter $\eta$ defined in
\eqref{FC} by  the relation
\[
\eta=(q+\ii p)^t.
\]

\end{Remark}
  
   If we denote
    $\alpha:=\frac{k}{4}$, $\gamma=:\nu$ and take into consideration
    assertion d) in Lemma \ref{actM}, it is obtained 

    \begin{Theorem}\label{PR2}
      The  metric on $\mc{X}^J_n$, $G^J_n(\R)_0$-invariant
    to the action in {\emph{Lemma \ref{actM}}}, has the expressions
\begin{subequations}\label{MULTL}\begin{align}
      \dd s_{\mc{X}^J_n}^2(x,y,p,q)&\!=\!\alpha\tr[(y^{-1}\dd
                                      x)^2+(y^{-1}\dd y)^2]\nonumber\\  
        &\!+\!\gamma[\dd p
        (xy^{-1}x+yy^{-1}y)\dd p^t+\dd q y^{-1}\dd q^t +2\dd p
        xy^{-1} \dd q^t];\\
  \dd s_{\mc{X}^J_n}^2(x,y,\chi,\psi)&\!=\!\alpha\tr[(y^{-1}\dd
                                      x)^2+(y^{-1}\dd y)^2]\nonumber\\  
        &\!+\!\gamma[\dd \psi^t
        (xy^{-1}x\!+\!yy^{-1}y)\dd\psi\!+\!\dd\chi^ty^{-1}\dd\chi\!+\!2\dd\psi^t
        xy^{-1} \dd\chi];\\
  \dd s_{\mc{X}^J_n}^2(x,y,\xi,\rho)&\!=\!\alpha\tr[(y^{-1}\dd
                                      x)^2+(y^{-1}\dd y)^2]\nonumber \\
  &\!+\!\gamma[ \dd \xi y^{-1}\dd \xi^t +\dd \rho y^{-1}\dd \rho^t
+\rho y^{-1}\dd x y^{-1}(\rho y^{-1}\dd x)^t  \nonumber\\&\!+\!\rho y^{-1}\dd y
    y^{-1}(\rho y^{-1}\dd y)^t  
 -2\rho y^{-1}\dd xy^{-1}\dd \xi^t -2\rho y^{-1}\dd y^{-1}\dd \rho^t]
      \end{align}
      \end{subequations}

      The  three parameter metric on $\tilde{\mc{X}}^J_n$, $G^J_n(\R)$-invariant
      to the action {\emph{c)}} in {\emph{Lemma \ref{actM}}},
      is \begin{equation}\label{BIGM}\begin{split}
        \dd s_{\tilde{\mc{X}}^J_n}^2(x,y,p,q,\kappa)&=
        \dd s_{\mc{X}^J_n}^2(x,y,p,q)+ \lambda_6^2\\
         & =\alpha\tr[(y^{-1}\dd
                                      x)^2+(y^{-1}\dd y)^2]\\  
        & + \gamma[\dd p
        (xy^{-1}x+yy^{-1}y)\dd p^t+\dd q y^{-1}\dd q^t +2\dd p
        xy^{-1} \dd q^t]\\
        & +\delta (\dd \kappa -p \dd q^t+q \dd p^t)^2.
      \end{split}
      \end{equation}
      \end{Theorem}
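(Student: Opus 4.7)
The plan is to obtain the metric on $\mc{X}^J_n$ by converting the $G^J_n(\R)_0$-invariant \Ka~two-form of Proposition \ref{L11} into a Hermitian metric, read it off in each of the three coordinate systems via the change of variables already recorded in Proposition \ref{L11}, and then extend it to the odd-dimensional manifold $\tilde{\mc{X}}^J_n$ by adding the square of the invariant one-form $\lambda^R$ of Lemma \ref{actM} d). The key identifications to be used from the outset are $\alpha=k/4$ and $\gamma=\nu$, which link the parameters in \eqref{BBB} to the labels $k,\nu$ of the discrete series of $\Sp$ and of the Heisenberg representation that govern the CS construction.

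First, starting from $-\ii\omega_{\mc{X}^J_n}(v,u)=\tfrac{k}{2}\tr(H\wedge\bar H)+\tfrac{2\nu}{\ii}\tr(G^tD\wedge\bar G)$ with $D=(\bar v-v)^{-1}=(2\ii y)^{-1}$ and $H=D\dd v$, I would symmetrize the wedge products to obtain the associated \Ka~metric $g(\,\cdot\,,J\,\cdot\,)$. The first trace $\tr(H\bar H)$ produces $\tfrac{1}{4}\tr[(y^{-1}\dd x)^2+(y^{-1}\dd y)^2]$, giving the $\alpha$-term common to all three expressions in \eqref{MULTL}. The second trace depends only on $G$ and the Hermitian weight $D$; computing $2\tr(G^{t}D\bar G)$ in each coordinate system then yields the $\gamma$-term.

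Second, for the three coordinate forms I would substitute the explicit formulas of Proposition \ref{L11} c), d) into $G$: using $G^t=\dd p\,v+\dd q=\dd p(x+\ii y)+\dd q$ gives, after separating real and imaginary parts and using $D\sim y^{-1}$, the block $\dd p(xy^{-1}x+yy^{-1}y)\dd p^t+\dd q\,y^{-1}\dd q^t+2\dd p\,xy^{-1}\dd q^t$; substituting $p^t=\psi$, $q^t=\chi$ and \eqref{217} reproduces the $(x,y,\chi,\psi)$ expression; and substituting the formula $G^t=\dd\xi-\rho y^{-1}\dd x+\ii(\dd\rho-\rho y^{-1}\dd y)$ reproduces the $(x,y,\xi,\rho)$ expression with the extra $\rho y^{-1}$ correction terms. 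That these three are genuinely the same metric follows because all three coordinate changes in Proposition \ref{L11} are diffeomorphisms on the domain of definition. Invariance under $G^J_n(\R)_0$ is inherited from the CS construction of $\omega_{\mc{X}^J_n}$ in Proposition \ref{L11}.

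Finally, to obtain \eqref{BIGM} on $\tilde{\mc{X}}^J_n\approx\mc{X}^J_n\times\R$, I would add $\lambda_6^2=\delta(\lambda^R)^2$ with $\lambda^R=\dd\kappa-p\dd q^t+q\dd p^t$. The resulting symmetric tensor is $G^J_n(\R)$-invariant because the $\mc{X}^J_n$-part is $G^J_n(\R)_0$-invariant and the generator $R$ is central in $\got{h}_n(\R)$ and acts trivially on $(x,y,p,q)$, while the one-form $\lambda^R$ itself is invariant under the full $G^J_n(\R)$ action by Lemma \ref{actM} d). The main obstacle is the bookkeeping in step two: turning the Hermitian form $2\tr(G^{t}D\bar G)$ into the explicit real quadratic form in each coordinate system requires careful separation of $G$ into real and imaginary parts, using that $y$ is symmetric and positive definite, and keeping track of $\dd p$ vs.\ $\dd p^t$ (as well as the analogous conventions for $\chi,\psi,\xi,\rho$) so that the three expressions in \eqref{MULTL} come out with the signs and coefficients shown; by contrast, invariance on $\tilde{\mc{X}}^J_n$ is essentially immediate from the earlier lemmas.
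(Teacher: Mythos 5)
Your proposal follows essentially the same route as the paper, which obtains Theorem \ref{PR2} directly by setting $\alpha=k/4$, $\gamma=\nu$ in the balanced (CS) metric coming from the \Ka~two-form of Proposition \ref{L11}, rewriting it in the three coordinate systems via the changes of variables recorded there, and then adding $\lambda_6^2=\delta(\lambda^R)^2$ whose invariance is Lemma \ref{actM} d). The only nitpick is the sign $D=(\bar v-v)^{-1}=(-2\ii y)^{-1}$, not $(2\ii y)^{-1}$, which does not affect your argument.
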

   
        Formula \eqref{MULTL} (\eqref{BIGM}) is a generalisation to
        $\mc{X}^J_n$ ($\tilde{\mc{X}}^J_n$), $n\in \N$, of
        equation (5.25b) (respectively, (5.30)) in \cite{SB19} corresponding to $n=1$.

           %%%%%%\section{$G^J_1(\R)$- \color{red}bag sau nu?\color{black}}
           \section{Appendix: Other representations of the Jacobi  algebra}\label{AP7}
We  remind that 
the Jacobi algebra $\got{g}^J_n$, also denoted $ \got{st}( n,\R )$
 by Kirillov
 in \cite[\S 18.4]{kir} or $\got{tsp}(2n+2,\R )$ in \cite{kir2},
 is isomorphic with the subalgebra of Weyl algebra  $A_n$ (see also
\cite{dix}) of polynomials of degree maximum 2 in the variables
$p_1,\dots,p_n, q_1,\dots,q_n$.

           In  \cite{multi} we have considered complex  and biboson
             realization of Lie algebra $\got{sp}(n,\R)$ as \- $\got{sp}(n,\R)_{\C}$
             %%%%% \begin{equation}\label{nr1}
             \[
\got{sp}(n,\R)_{\C}= \Big\langle \sum_{i,j=1}^n \big(2a_{ij}K^0_{ij} +
b_{ij}K^+_{ij}-\bar{b}_{ij}K^-_{ij}\big)\Big\rangle,
\]
%%%%%%\end{equation}
where matrices $a=(a)_{ij}$, $b=(b)_{ij}$, $i, j, =1,\dots,n$ verify conditions $a^{\dagger}=-a$, $b^t=b$.
The realization of the generators of the symplectic group in biboson
operators was observed firstly in \cite{gosh}, \cite{mlo}.

The correspondence between the generators \eqref{alg2} and the
generators in \cite[p. 248]{yang} of $G^J_n(\R)$ is
$$2H\rightarrow A+S,~4F\rightarrow B+T,~4G\rightarrow B-T,~D^0
\rightarrow R,~D_{1q}\rightarrow P_q,~\hat{D}_{1q}\rightarrow Q_q.$$

The algebra  $\rm{wsp}(2N,\R)$  in \cite{CQ}, the  semidirect
product of  $\got{sp}(n,\R)$ and Heisenberg,  is essentially the algebra in \cite{multi}, except a factor 2.

The algebra of the inhomogeneous symplectic group $\text{ISp}(2,\R)$
in \cite{kr} is the same  as our Jacobi algebra
$\got{g}^J_1$ in \cite{holl}. %%%%Moreover, the following results  in \cite{kr} are identical with those of  \cite{holl}.

The Jacobi algebra $\got{g}^J_n$ of the Jacobi group $G^J_n$  is realized
as two-foton algebra \cite{zhang} and $G^J_n$ is embedded in
$\SPn_{\C}$ in the context of mean-field theory in  Nuclear Physics \cite{nish}.

\section{Appendix: Differential of square root of a symmetric
  matrix}\label{AP8}

Let us consider  a  matrix $A\in M(n,\R)$ with the  eigenvalues
$\lambda_1,\dots,\lambda_n$, and let $\R\in\alpha>0$. Then there  exists
a unitary matrix $U$  such that
$$A^{\alpha}=U \mr{dg}(\lambda_1^{\alpha},\dots,\lambda_n^{\alpha}
)U^{\dagger}.$$
For $\alpha= 1/2$, i.e. $A^{1/2}A^{1/2}=A$,  we have
\begin{equation}\label{doua*}
  \dd A^{1/2} A^{1/2} +A^{1/2}\dd A^{1/2}= \dd A.
\end{equation} \eqref{doua*} is a particular case of the  matrix Sylvester equation
\begin{equation}\label{una*}
  AX+XB=C,
\end{equation}
where $A\in M(n),B\in M(m)$ and $X,C\in M(m,n)$. Then the solution $X$
of
the matrix  equation \eqref{una*} can be written as 
\cite{der}
\begin{equation}\label{83}
  (\db{1}_m\otimes A+B^t\otimes \db{1}_n)\mr{vec}(X)=\mr{vec}(C),
\end{equation}
and the solution of the differential equation \eqref{doua*} becomes
\begin{equation}\label{84}
  \mr{vec}(\dd A^{1/2})=((A^t)^{1/2}\oplus
  A^{1/2})^{-1}\mr{vec}(\dd A).
 \end{equation}
$\otimes$ denotes in \eqref{83}, the Kroneker product, $\oplus$ in
\eqref{84} denotes the Kronecker sum, while $\mr{vec}(X)$ denotes the
vectorization of the matrix $X$ \cite{der},\cite{lutke}.

If the matrix $A$ is symmetric and positive definite, we introduce the
notation \cite{der}
$$\alpha:=\mr{vech} (A)=L_nA, \quad a:=D_n \alpha$$
$$\sigma:= \mr{vech}({A}^{1/2}) \quad \sigma = L_nA^{1/2},\quad
A^{1/2}=D_n \sigma,$$
where $\mr{vech}(X)$ denotes the half-vectorization of the matrix $X$,
while 
$D_n$ and $L_n$ denotes the duplication, respectively elimination
matrix, see  \cite{lutke} for definitions.

It is obtained
$$\dd \sigma=L_n ((A^t)^{1/2}\oplus A^{1/2})^{-1} D_n\dd \alpha,\quad
\frac{\pa \sigma}{\pa \alpha}=L_n ((A^t)^{1/2}\oplus A^{1/2})^{-1} D_n.$$

In our case of \eqref{LFGHL}, due to \eqref{c0a01}, we have to replace
for the symmetric  positive definite matrix
$y\rightarrow{y}^{1/2}$, and 
formula \eqref{84} reads
%%%%%% \begin{equation}\label{DIFYh}
\[
\begin{split}
  \mr{vec} (\dd y^{1/2}) & =(y^{1/2}\oplus y^{1/2})^{-1}\mr{vec}(\dd
  y)=(y^{1/2}\otimes\un + \un\otimes y^{1/2})^{-1}\mr{vec}(\dd y).\\
  \mr{vech} (\dd y^{1/2}) & \!=\!L_n(y^{1/2}\!\oplus \!y^{1/2})^{-1}D_n\mr{vech}(\dd
  y)\\
  &=\!L_n(y^{1/2}\otimes\un \!+\! \un\otimes y^{1/2})^{-1}D_n\mr{vech}(\dd y).
\end{split}
\]
%%%%%%%\end{equation}

\subsection*{Acknowledgements} 
This research  was conducted in  the  framework of the 
ANCS project  program   PN 19 06
01 01/2019. 
I am indebted to  Professor Arkadiusz Jadczyk for conversations about the  symplectic
group in the 2014. I would like to thank Professor Jae-Hyun  Yang for
sending me his collected papers.

\end{document}